\def\fM{\mathfrak M}
\def\fm{\mathfrak m}
\def\Z{\mathbb Z}
\def\T{\mathbb T}
\def\N{\mathbb N}
\def\Q{\mathbb Q}
\def\R{\mathbb R}
\def\E{\mathbb E}
\def\cR{\mathcal R}
\def\cJ{\mathcal J}
\def\cK{\mathcal K}
\def\cZ{\mathcal Z}
\def\cG{\mathcal G}
\def\cX{\mathcal X}
\def\cY{\mathcal Y}
\def\cU{\mathcal U}
\def\cV{\mathcal V}
\def\fX{\mathfrak X}
\def\fC{\mathfrak C}
\def\fm{\mathfrak m}
\def\fd{\mathfrak d}
\def\fe{\mathfrak e}
\def\bfa{\mathbf{a}}
\def\bfb{\mathbf{b}}
\def\sB{\mathscr{B}}
\def\sK{\mathscr{K}}
\def\1{{\bf 1}}
\def\pmod #1{\ ({\rm{mod}}\ #1)}
\theoremstyle{plain}
\newtheorem{theorem}{Theorem}
\newtheorem{lemma}{Lemma}
\newtheorem{corollary}{Corollary}
\newtheorem{proposition}{Proposition}
\theoremstyle{definition}
\theoremstyle{remark}
\newtheorem{remark}{Remark}
\begin{document}
\title{Ergodic recurrence and bounded gaps between primes}

\begin{abstract}
Let $(\cX,\sB_\cX,\mu,T)$ be a measure-preserving probability system with $T$ is invertible. 
Suppose that $A\in \sB_\cX$ with $\mu(A)>0$ and $\epsilon>0$. For any $m\geq 1$,
there exist infinitely many primes $p_0,p_1,\ldots,p_m$ with $p_0<\cdots<p_m$ such that
$$
\mu(A\cap T^{-(p_i-1)}A)\geq \mu(A)^2-\epsilon
$$
for each $0\leq i\leq m$ and
$$
p_m-p_0<C_m,
$$
where $C_m>0$ is a constant only depending on $m$, $A$ and $\epsilon$.

\end{abstract}
\author{Hao Pan}
\address{Department of Mathematics, Nanjing University, Nanjing 210093,
People's Republic of China}
\email{haopan79@zoho.com}
\keywords{ergodic recurrence; bounded gaps between primes; Kronecker factor}

\subjclass[2010]{}
\maketitle

\section{Introduction}
\setcounter{lemma}{0}\setcounter{theorem}{0}\setcounter{proposition}{0}\setcounter{corollary}{0}\setcounter{remark}{0}
\setcounter{equation}{0}

Suppose that $(\cX,\sB_\cX,\mu)$ is a probability measure space. Let $T:\,\cX\to\cX$ be a measure-preserving transformation on $\cX$, i.e., $\mu(A)=\mu(T^{-1}A)$ for each $A\in\sB_\cX$. The classical Poincare recurrence theorem (cf. \cite[Theorem 2.11]{EW11}) says that for any $A\in \sB_\cX$ with $\mu(A)>0$, there exist infinitely many integers $n\geq 1$ such that $\mu(A\cap T^{-n}A)>0$.

Assume that $T$ is an invertible transformation. In 1934, Khintchine \cite{Kh34} considered the set of recurrence
$$
S_{A,\epsilon}=\{n\in\N:\,\mu(A\cap T^{-n}A)\geq\mu(A)^2-\epsilon\}.
$$
Khintchine proved that for any $A\in \sB_\cX$ with $\mu(A)>0$ and any $\epsilon>0$, $S_{A,\epsilon}$ has a bounded gap, i.e., there exists a constant $L_0>0$ (only depending on $A$ and $\epsilon$) such that
$$
S_{A,\epsilon}\cap[x,x+L_0]\neq\emptyset
$$
for any $x\geq 1$. Notice that the bounded gaps can be arbitrarily large for the varied measure-preserving systems. For example, for any positive integer $m$, consider the cyclic group $\Z_m=\Z/m\Z$ with the discrete probability measure. Let $T:\,x\mapsto x+1$ and $A=\{0\}\subseteq\Z_m$. Clearly $A\cap T^{-n}A\neq\emptyset$ if and only if $n$ is a multiple of $m$. So the bounded gap of $S_{A,\epsilon}$ is always $m$ for any $0<\epsilon<m^{-2}$.
For the further extensions of Khintchine's theorem, the readers may refer to \cite{BHK05,Fr08,Ch11,CFH11}.

On the other hand, a recent breakthrough on number theory is about the bounded gaps between consecutive primes. In \cite{Zh14},
with help the Goldston-Pintz-Y{\i}d{\i}r{\i}m \cite{GPY09} sieve method and a variant of the Bombieri-Vinogradov theorem, Zhang showed for the first time that the gap between consecutive two primes can be bounded by a constant infinitely often. In fact, Zhang proved that
$$
\liminf_{\substack{p,q\to\infty\\ p<q\text{ are primes}}}(q-p)\leq7\times 10^7.
$$
Subsequently, using a multi-dimensional sieve method, Maynard \cite{Ma15} greatly improved the bound $7\times 10^7$ to $600$. Nowadays, the best bound is $246$ \cite{Pol14}. In fact,  with help of the multi-dimensional sieve method,
Maynard and Tao independently showed that for any $m\geq 1$, the gaps between consecutive $m$ primes also can be bounded by a
constant  infinitely 
many times.
That is, there exist infinitely many primes $p_0,p_1,\ldots,p_m$ with $p_0<\cdots<p_m$ such that
$$
p_m-p_0<C_m,
$$
where $C_m>0$ is a constant only depending on $m$. Subsequently, the Maynard-Tao theorem was extended to the primes of some special types \cite{Po14, CPS15, LP15}.
There is a nice survey on Zhang's theorem and Maynard-Tao's theorem written by Granville \cite{Ga15}

Notice that Khintchine's theorem, Zhang's theorem and Maynard-Tao's theorem are all concerning the bounded gaps.
It is natural to ask whether we can establish a connection between those theorems.
The purpose of this paper is to give a Khintchine type extension to the Maynard-Tao theorem.

Let us consider those $n$ in $S_{A,\epsilon}$ which are shifted primes, i.e., a prime minus one.
For $A\in \sB_\cX$ with $\mu(A)>0$ and $\epsilon>0$, let
$$
\Lambda_{A,\epsilon}=\{p\text{ prime}:\,\mu(A\cap T^{-(p-1)}A)\geq\mu(A)^2-\epsilon\}.
$$
\begin{theorem}\label{mainT} Let $(\cX,\sB_\cX,\mu,T)$ be a measure-preserving probability system with $T$ is invertible. 
Suppose that $A\in \sB_\cX$ with $\mu(A)>0$ and $\epsilon>0$.
For any $m\geq 1$, there exist infinitely many primes $$p_0,p_1,\ldots,p_m\in\Lambda_{A,\epsilon}$$ with $p_0<\cdots<p_m$ such that
$$
p_m-p_0<C_m,
$$
where $C_m>0$ is a constant only depending on $m$, $A$ and $\epsilon$.
\end{theorem}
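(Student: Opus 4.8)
\emph{Proof strategy.} The plan is to record the recurrence condition spectrally and then feed it into the Maynard--Tao sieve. First I would set $f=\1_A$ and let $U\colon g\mapsto g\circ T$ be the Koopman operator, which is unitary on $L^2(\mu)$ since $T$ is invertible. By the spectral theorem there is a finite positive measure $\sigma$ on $\T=\R/\Z$ with
\[
\mu(A\cap T^{-n}A)=\langle U^nf,f\rangle=\int_\T e(nt)\,d\sigma(t)\qquad(n\in\Z),\qquad e(t):=e^{2\pi i t},
\]
and, $f$ being real, $\sigma$ is symmetric with total mass $\mu(A)$ and with $\sigma(\{0\})=\|\E(f\mid\mathcal I)\|_2^2\ge\mu(A)^2$, where $\mathcal I$ is the $\sigma$-algebra of $T$-invariant sets. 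Writing $\sigma=\sigma(\{0\})\delta_0+\sigma_{\mathrm{pp}}+\sigma_{\mathrm c}$ with $\sigma_{\mathrm{pp}}=\sum_{j\ge1}c_j\delta_{\theta_j}$ ($\theta_j\neq0$, $c_j>0$, $\sum_jc_j<\infty$) and $\sigma_{\mathrm c}$ continuous, I would fix $J$ with $\sum_{j>J}c_j<\epsilon/8$ and $\eta>0$ small in terms of $\epsilon$ and $\mu(A)$. A short computation then shows that whenever an integer $n$ satisfies $\|n\theta_j\|_{\R/\Z}<\eta$ for $1\le j\le J$ and $\bigl|\int_\T e(nt)\,d\sigma_{\mathrm c}(t)\bigr|<\epsilon/2$, one has $\mu(A\cap T^{-n}A)\ge\mu(A)^2-\epsilon$. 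So it suffices to produce, for each $m$, infinitely many $(m{+}1)$-tuples of primes $p_0<\dots<p_m$ with $p_m-p_0$ bounded in terms of $m,A,\epsilon$ and with each $p_i-1$ satisfying these two conditions.

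Next I would split $\theta_1,\dots,\theta_J$ into its rational members (with common denominator $Q$) and its irrational members $\beta_1,\dots,\beta_s$; taking $\eta<1/Q$ turns the rational part of the Bohr condition into the congruence $p\equiv1\pmod Q$. With $k=k(m)$ chosen as in the Maynard--Tao theorem \cite{Ma15}, I would pick an admissible $k$-tuple $\mathcal H=\{h_1<\dots<h_k\}$ all of whose entries are divisible by a fixed modulus $M$ --- a common multiple of $Q$, of $\prod_{\ell\le k}\ell$, and of the denominators of all rationals occurring among $\sum_r m_r\beta_r$ with $|m_r|\le M_0$ (where $M_0$ is fixed later) --- and which also satisfy $\|h_i\beta_r\|<\eta/2$ for all $i,r$; such an $\mathcal H$ exists because admissibility is automatic once $\prod_{\ell\le k}\ell\mid M$, and the extra constraints cut out a translate of a Bohr set inside $M\Z$, which is syndetic. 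Then I would run the multidimensional sieve over $n\in[N,2N)$ restricted by $n\equiv1\pmod M$ and $\|(n-1)\beta_r\|<\eta/2$ $(1\le r\le s)$, aiming to show that the associated nonnegative weights $w_n$ satisfy
\[
\sum_{n}w_n\Bigl(\sum_{i=1}^k\1_{\mathrm{prime}}(n+h_i)-m-\sum_{i=1}^k\1_{\mathrm{prime}}(n+h_i)\,\1_{\{|E(n+h_i-1)|\ge\epsilon/2\}}\Bigr)>0
\]
for all large $N$, where $E(n):=\int_\T e(nt)\,d\sigma_{\mathrm c}(t)$. Granting this, each large $N$ yields an $n$ with at least $m+1$ indices $i$ for which $n+h_i$ is prime and $|E(n+h_i-1)|<\epsilon/2$; by the construction of $\mathcal H$ these primes then lie in $1+Q\Z$ and satisfy $\|(n+h_i-1)\beta_r\|<\eta$, hence belong to $\Lambda_{A,\epsilon}$ and lie in an interval of length $h_k-h_1$. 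Letting $N\to\infty$ produces infinitely many such tuples, proving Theorem~\ref{mainT} with $C_m=h_k-h_1$.

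Everything hinges on the displayed inequality, and this is where the real difficulty lies. To handle the Bohr restriction on $n$, I would expand its indicator as a finite exponential sum $\sum_{|m_r|\le M_0}\widehat{c}_{\mathbf{m}}\,e\!\bigl((n-1)\sum_r m_r\beta_r\bigr)$ up to an $L^1$-small error (by a Beurling--Selberg majorant/minorant): the frequencies $\sum_r m_r\beta_r$ that are rational become trivial on the progression $n\equiv1\pmod M$ and, with the zero frequency, contribute a \emph{positive} constant --- the density of the Bohr set inside that progression --- times the usual Maynard--Tao main term, while every remaining frequency is a \emph{fixed} irrational $\gamma$, for which the contribution is negligible provided one has a Bombieri--Vinogradov estimate for primes twisted by $e(n\gamma)$, namely
\[
\sum_{q\le N^{\vartheta}}\ \max_{(a,q)=1}\ \Bigl|\sum_{\substack{n\le N\\ n\equiv a\,(q)}}\Lambda(n)e(n\gamma)\Bigr|\ \ll_{\gamma,B}\ N(\log N)^{-B}
\]
for some fixed $\vartheta>0$; this follows from Vaughan's identity together with Vinogradov's bounds on exponential sums over primes. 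The same twisted estimates, applied via $\1_{\{|E(n+h_i-1)|\ge\epsilon/2\}}\le 4\epsilon^{-2}\iint e\!\bigl((n+h_i-1)(t-s)\bigr)\,d\sigma_{\mathrm c}(t)\,d\sigma_{\mathrm c}(s)$ and dominated convergence --- the diagonal $\{t=s\}$ being $\sigma_{\mathrm c}\times\sigma_{\mathrm c}$-null since $\sigma_{\mathrm c}$ has no atoms, so $t-s$ is irrational almost everywhere --- show the last sum in the display to be $o$ of the main term, so that positivity is preserved. The hard part will therefore be the number-theoretic one: establishing this twisted Bombieri--Vinogradov input and dovetailing it with the sieve of \cite{Ma15}; the ergodic ingredient is merely the elementary spectral reduction above (in the spirit of Khintchine's theorem, together with the continuity of $\sigma_{\mathrm c}$).
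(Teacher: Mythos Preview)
Your strategy is sound and would yield a correct proof, but it is organized rather differently from the paper's. Both arguments rest on the same number-theoretic core --- an exponential-sum analogue of Bombieri--Vinogradov (the paper cites Liu--Zhan, which is exactly the ``twisted BV'' you invoke) married to the Maynard sieve; this combination is the content of the paper's Proposition~2.1, and you have correctly identified it as the hard part.

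The ergodic packaging, however, diverges in three notable ways. First, you work directly with the spectral measure of $U$ on $f=\1_A$, using $\sigma(\{0\})=\|\E(f\mid\mathcal I)\|_2^2\ge\mu(A)^2$; this holds without any ergodicity hypothesis, so you bypass the ergodic decomposition that occupies the paper's Section~5 (where the Kronecker factor $G_{\mu_y}\oplus\T^{d_y}$ varies with the ergodic component and forces a somewhat delicate uniformity argument). Second, for the discrete spectrum you \emph{restrict} $n$ to a Bohr set and also force the admissible tuple $\{h_i\}$ into a Bohr set, so that the eigenvalue contribution is automatically near maximal for every surviving $n$; the paper instead proves a \emph{lower bound}
\[
\sum_{n}\varpi(n+h_i)\,\Omega_n\,\mu(A\cap T^{-(n+h_i-1)}A)\ \ge\ (\mu(A)^2-\epsilon)\cdot\frac{\cJ_i N}{(\log R)^k}\cdot\frac{W^k}{\phi(W)^{k+1}}
\]
on the full weighted correlation (Proposition~5.1), via an explicit computation on the Kronecker torus (Section~4). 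Third, for the continuous spectrum you use Chebyshev's inequality and dominated convergence on $\sigma_{\mathrm c}\times\sigma_{\mathrm c}$, whereas the paper (Section~3) estimates $\int g_N\,d\upsilon$ directly by splitting $\T$ into minor arcs, major arcs with small $q$ (where non-atomicity of $\upsilon$ makes the mass small), and major arcs with $q\nmid W$ (where Proposition~2.1 supplies the $w^{-1+\epsilon}$ saving). Your DCT step is legitimate --- the normalized integrand is uniformly bounded by $M_N^{-1}\sum_n\varpi(n+h_i)\Omega_n=O(1)$, and for each fixed irrational frequency the pointwise limit vanishes by precisely the arc analysis just described --- but note that verifying that pointwise convergence already amounts to running the paper's Section~3 argument for a Dirac mass, so no work is truly saved there.

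In short: your route trades the ergodic decomposition for a Bohr restriction on both $n$ and $\mathcal H$, which is arguably cleaner on the dynamical side; the analytic input and the sieve mechanics are identical to the paper's.
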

In particular, we may find infinitely many pairs of primes $p,q$ with $p<q$ such that
$$
\mu(A\cap T^{-(p-1)}A),\ \mu(A\cap T^{-(q-1)}A)>0
$$
and the gap $q-p$ is bounded by a constant $C$.

Let us see a combinatorial consequence of Theorem \ref{mainT}. For any $E\subseteq\N$, define the upper Banach density of $E$ 
$$
\overline{d}_B(E):=\limsup_{\substack{N-M\to+\infty\\ N\geq M\geq 0}}\frac{|E\cap[M,N]|}{N-M+1}.
$$
In \cite{Sa78}, Sark\"ozy proved that if $\overline{d}_B(E)>0$, there exist infinitely many prmes $p$ such that
$$
p-1\in E-E,
$$
where $E-E=\{x-y:\,x,y\in E\}$. Further, Bergelson and Lesigne \cite{BL08} showed 
$\{p-1:\,p\text{ is prime}\}$ is an enhanced van der Corput set.

Now with help of the well-known Furstenberg corresponding principle (cf. \cite[Lemma 2.5]{Fu81}), 
we can obtain
\begin{corollary}
Suppose that $E$ is a subset of $\N$ with $\overline{d}_B(E)>0$ and $\epsilon>0$.
Let
$$
\Lambda_{E,\epsilon}^*=\big\{p\text{ prime}:\,\overline{d}_B\big(E\cap(p-1+E)\big)\geq\overline{d}_B(E)^2-\epsilon\big\}.
$$
Then for any $m\geq 1$, there exist infinitely many primes $p_0,p_1,\ldots,p_m\in \Lambda_{E,\epsilon}^*$ with $p_0<\cdots<p_m$ such that $p_m-p_1$ is bounded by a constant $C_m$.
\end{corollary}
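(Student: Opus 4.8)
The plan is to deduce the corollary from Theorem~\ref{mainT} by means of the Furstenberg correspondence principle, so that nothing beyond bookkeeping is really needed. First I would invoke the correspondence principle (\cite[Lemma~2.5]{Fu81}): since $\overline{d}_B(E)>0$, there exist an invertible measure-preserving probability system $(\cX,\sB_\cX,\mu,T)$ and a set $A\in\sB_\cX$ with $\mu(A)=\overline{d}_B(E)$ for which
\[
\mu\bigl(A\cap T^{-n}A\bigr)\leq\overline{d}_B\bigl(E\cap(E-n)\bigr)\qquad\text{for every }n\in\Z .
\]
Concretely, one works on $\{0,1\}^{\Z}$ with the shift $T$, takes $A=\{\omega:\omega(0)=1\}$, and lets $\mu$ be a weak-$*$ limit of the uniform measures on $\{T^{m}\1_E:M_k\leq m\leq N_k\}$ along intervals $[M_k,N_k]$ realising $\overline{d}_B(E)$; then $\mu(A)=\overline{d}_B(E)$, and for each $n$ the quantity $\mu(A\cap T^{-n}A)$ is the limiting frequency of $E\cap(E-n)=\{m:m\in E,\ m+n\in E\}$ along those intervals, hence at most $\overline{d}_B(E\cap(E-n))$.

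Next I would record the elementary identity $E\cap(h+E)=\bigl(E\cap(E-h)\bigr)+h$ for $h\in\Z$, which, together with translation-invariance of $\overline{d}_B$, gives $\overline{d}_B(E\cap(h+E))=\overline{d}_B(E\cap(E-h))$. Taking $h=p-1$ and combining with the displayed inequality yields
\[
\overline{d}_B\bigl(E\cap(p-1+E)\bigr)\geq\mu\bigl(A\cap T^{-(p-1)}A\bigr)\qquad\text{for every prime }p .
\]
Because $\mu(A)=\overline{d}_B(E)$, this immediately gives the inclusion $\Lambda_{A,\epsilon}\subseteq\Lambda_{E,\epsilon}^{*}$: whenever $\mu(A\cap T^{-(p-1)}A)\geq\mu(A)^2-\epsilon$ one has $\overline{d}_B(E\cap(p-1+E))\geq\overline{d}_B(E)^2-\epsilon$.

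Finally I would apply Theorem~\ref{mainT} to this system $(\cX,\sB_\cX,\mu,T)$, the set $A$ and the given $\epsilon$: for the prescribed $m$ it produces infinitely many primes $p_0<\cdots<p_m$ in $\Lambda_{A,\epsilon}$ with $p_m-p_0<C_m$, where $C_m$ depends only on $m$, $A$ and $\epsilon$, hence only on $m$, $E$ and $\epsilon$ since $A$ was manufactured from $E$. By the inclusion above these primes lie in $\Lambda_{E,\epsilon}^{*}$, and $p_m-p_0<C_m$ is in particular the bound asked for. The argument has no genuine obstacle; the only point requiring care is fixing the direction of the correspondence inequality and passing from $E\cap(E-(p-1))$ to $E\cap(p-1+E)$ through the identity above.
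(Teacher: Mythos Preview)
Your proposal is correct and is precisely the argument the paper has in mind: the paper does not give a proof at all beyond the sentence ``with help of the well-known Furstenberg corresponding principle (cf.\ \cite[Lemma~2.5]{Fu81}), we can obtain,'' and your write-up simply makes that one-line reduction explicit. The only detail worth keeping an eye on is that the version of the correspondence principle you invoke really does yield $\mu(A)=\overline{d}_B(E)$ (not merely $\geq$), since you need $\mu(A)^2-\epsilon\geq\overline{d}_B(E)^2-\epsilon$; your sketched construction on $\{0,1\}^{\Z}$ along intervals realising the upper Banach density does give equality because $A$ is clopen.
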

Suppose that $N$ is sufficiently large and $W=W_0\prod_{p\leq w}p$, where $w$ very slowly tends to infinity as $N\to+\infty$. Let $n\sim N$ mean $N\leq n\leq 2N$.
In view of the Maynard sieve method, we need to compute the sum
\begin{equation}\label{varpiOmeganMuATnA}
\sum_{\substack{n\sim N\\ n\equiv b\pmod{W}}}\varpi(n+h)\Omega_n\cdot\mu(A\cap T^{-(n+h-1)}A),
\end{equation}
where $\Omega_n\geq 0$ is  some weight and
$$
\varpi(n)=\begin{cases}\log n,&\text{if }n\text{ is prime},\\
0,&\text{otherwise}.\end{cases}
$$
As we shall see later, though seemingly it is not easy to give an asymptotic formula for (\ref{varpiOmeganMuATnA}), we can obtain a suitable lower bound.
Define
$$
\1_A(x)=\begin{cases}1,&\text{if }x\in A,\\
0,&\text{otherwise}.
\end{cases}
$$
Our strategy is to write
$$
\1_A(x)=f_1(x)+f_2(x)
$$
under the assumption $T$ is ergodic,
where $f_1$ belongs to $L^2(\cX,\sK,\mu)$, which is the closed $L^2$-subspace generated by all eigenfunctions of $T$, and $f_2$ is orthogonal to $L^2(\cX,\sK,\mu)$.

In Section 3, we shall show that
$$
\lim_{N\to\infty}\frac{}{}\sum_{\substack{n\sim N\\ n\equiv b\pmod{W}}}\varpi(n+h)\Omega_n\int_{\cX}f_2\cdot T^{n+h-1}f_2d\mu=0.
$$
And in Section 4, we can give a lower bound for
$$
\lim_{N\to\infty}\sum_{\substack{n\sim N\\ n\equiv b\pmod{W}}}\varpi(n+h)\Omega_n\int_{\cX}f_1\cdot T^{n+h-1}f_1d\mu,
$$
provided that $W,b,h$ satisfy some additional assumptions. In fact, we shall transfer this problem to studying an ergodic transformation on a torus, and use some basic techniques from the Diophantine approximation involving primes.
Of course, firstly we need to apply the Maynard sieve method to the exponential sum
$$
\lim_{N\to\infty}\sum_{\substack{n\sim N\\ n\equiv b\pmod{W}}}\varpi(n+h)\Omega_n\cdot
e\bigg(n\cdot\bigg(\frac{a}{q}+\theta\bigg)\bigg)
$$
in Section 2, where as usual let $e(x)=\exp(2\pi\sqrt{-1}x)$ for $x\in\R$.
Finally, with help of the ergodic decomposition theorem, we shall conclude the proof of Theorem \ref{mainT} in Section 5.

Throughout the whole paper, let $f(x)\ll g(x)$ mean $f(x)=O\big(g(x)\big)$ as $x$ tends to $+\infty$, i.e., $|f(x)|\leq C|g(x)|$ for some constant $C>0$ whenever $x$ is sufficiently large.
And $\ll_\epsilon$ means the implied constant in $\ll$ only depends on $\epsilon$. 
Let $\tau$ and $\phi$ denote the divisor function and the Euler totient function respectively. In particular, if $n$ is an integer, let $\mu(n)$ denote the value of the M\"obius function at $n$, rather than the measure on $\cX$.

\section{Maynard's sieve method for the exponential sums over primes}
\setcounter{lemma}{0}\setcounter{theorem}{0}\setcounter{proposition}{0}\setcounter{corollary}{0}\setcounter{remark}{0}
\setcounter{equation}{0}

Let $N$ be a sufficiently large integer and $R=N^{\frac1{1000}}$. 
Suppose that $w$ is a large integer with $w\leq\log\log\log N$. Let
\begin{equation}\label{Ww}
W=W_0\prod_{p\leq w\text{ prime}}p
\end{equation}
where $W_0$ is an fixed integer to be chosen later.

For distinct integers $h_0,h_1,\ldots,h_{k}$, we say $\{h_0,h_1,\ldots,h_{k}\}$ is an {\it admissible} set provided that for any prime $p$, there exists $1\leq a\leq p$ satisfying 
$$
a\not\equiv h_j\pmod{p}
$$ 
for each $0\leq j\leq k$.
We may construct an admissible set whose elements are all the multiple of $W_0$. In fact, let $$
h_j=jW_0\prod_{p\leq k+1}p$$ for $j=0,1,\ldots,k$. It is easy to see that $\{h_0,h_1,\ldots,h_{k}\}$ is an admissible set.

Suppose that  $\{h_0,h_1,\ldots,h_{k}\}$ is an admissible set. We may find $1\leq b\leq W$ such that
\begin{equation}\label{bhjp}
b\not\equiv-h_j\pmod{p}
\end{equation} 
for any prime $p\leq w$ and each $0\leq j\leq k$. Further, clearly we may assume that 
$b\equiv 1\pmod{W_0}$.
Also, assume that $w$ is sufficiently large such that the prime factors of $h_j-h_i$ is not greater than $w$ for each $0\leq i<j\leq k$. So if $n\equiv b\pmod{W}$, then $W,n+h_0,\ldots,n+h_k$ are co-prime.

Throughout this paper, we always make the following assumptions:\medskip

\noindent(I) $\{h_0,h_1,\ldots,h_k\}$ is an admissible set;\medskip

\noindent(II) $W,h_0,h_1,\ldots,h_k$ are divisible by $W_0$;\medskip

\noindent(III) $W,n+h_0,\ldots,n+h_k$ are co-prime for any $n\equiv b\pmod{W}$;\medskip

\noindent(IV) $b\equiv 1\pmod{W_0}$.\medskip

Suppose that $F(t_0,t_1,\ldots,t_k)$ is a smooth function over $\R^{k+1}$ whose support lies on
\begin{equation}\label{area}
\{(t_0,t_1,\ldots,t_k):\,t_0,\ldots,t_k\geq 0,\ t_0+\cdots+t_k\leq 1\}.
\end{equation}
Define
$$
\lambda_{d_0,d_1,\ldots,d_k}(F)=F\bigg(\frac{\log d_0}{\log R},\frac{\log d_1}{\log R},\ldots,\frac{\log d_k}{\log R}\bigg)\prod_{j=0}^k\mu(d_j)
$$
and let
$$
\Omega_n(F)=\bigg(\sum_{\substack{d_i\mid n+h_i\\ 0\leq i\leq k}}\lambda_{d_0,d_1,\ldots,d_k}(F)\bigg)^2.
$$
The following lemma is the main ingredient of Maynard's sieve method.
\begin{lemma}\label{maynard}
Suppose that $F_1(x_0,\ldots,x_{k})$ and $F_2(x_0,\ldots,x_{k})$ are two smooth functions whose supports lie on the area (\ref{area}). Then
\begin{align}\label{maynardid}
&\sum_{\substack{d_0,\ldots,d_{k},e_0,\ldots,e_{k}\\
W,[d_0,e_0],\ldots,[d_{k},e_{k}]\text{ coprime}}}\frac{\lambda_{d_0,\ldots,d_{k}}(F_1)\lambda_{e_0,\ldots,e_{k}}(F_2)}{[d_0,e_0]\cdots[d_{k},e_{k}]}\notag\\
=&\frac{1+o_w(1)}{(\log R)^{{k+1}}}\cdot\frac{W^{{k}+1}}{\phi(W)^{{k}+1}}\int_{\R^{{k}+1}}\frac{\partial^{{k}+1}F_1(t_0,\ldots,t_{k})}{\partial t_0\cdots\partial t_{k}}\cdot\frac{\partial^{{k}+1}F_2(t_0,\ldots,t_{k})}{\partial t_0\cdots\partial t_{k}}d t_0\cdots d t_{k}.
\end{align}
And (\ref{maynardid}) is also valid if the denominator $[d_0,e_0]\cdots[d_{k},e_{k}]$ in the left side is replaced by $\phi([d_0,e_0]\cdots[d_{k},e_{k}])$.
\end{lemma}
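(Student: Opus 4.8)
The plan is to perform the standard diagonalisation of the Selberg-type quadratic form on the left-hand side of \eqref{maynardid}, in the style of \cite{GPY09, Ma15}. One feature to keep in mind is that here the cutoff $F$ has been composed directly with the variables $d_i$ (rather than with dual variables, as in Maynard's treatment), and that $F$ is understood to be smooth on the closed simplex \eqref{area} without necessarily vanishing on its faces $t_i=0$; this is exactly what makes the M\"obius sums below produce the $(k+1)$-fold mixed partial $\partial^{k+1}F/\partial t_0\cdots\partial t_k$ appearing on the right.

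\emph{Diagonalisation.} For each $i$ I apply $\tfrac1{[d_i,e_i]}=\tfrac1{d_ie_i}\sum_{u_i\mid(d_i,e_i)}\phi(u_i)$, and for the second assertion the analogous $\tfrac1{\phi([d_i,e_i])}=\tfrac1{\phi(d_i)\phi(e_i)}\sum_{u_i\mid(d_i,e_i)}g(u_i)$ with $g$ multiplicative and $g(p)=p-2$ (both identities holding because all arguments are squarefree). Since $\lambda_{d_0,\ldots,d_k}(F)$ is supported on $d_i$ coprime to $W$, any prime dividing two of $d_0e_0,\ldots,d_ke_k$ must exceed $w$, so dropping the requirement that these be pairwise coprime changes the sum by $O\big(\sum_{p>w}p^{-1}\big)$ times the eventual main term, i.e.\ by $o_w(1)$. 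Substituting, interchanging summations, and introducing the dual variables
\[
y^{(F)}_{u_0,\ldots,u_k}:=\Big(\prod_{i=0}^{k}\mu(u_i)\phi(u_i)\Big)\sum_{\substack{u_i\mid d_i,\ (d_i,W)=1\\ 0\le i\le k}}\frac{\lambda_{d_0,\ldots,d_k}(F)}{d_0\cdots d_k},
\]
the left-hand side of \eqref{maynardid} becomes $\sum_{u_0,\ldots,u_k}\big(\prod_i\phi(u_i)\big)^{-1}\,y^{(F_1)}_{\mathbf u}y^{(F_2)}_{\mathbf u}+o_w(1)\cdot(\mathrm{main})$, the sum over squarefree $u_i$ coprime to $W$ (pairwise coprimality of the $u_i$ having again been discarded at cost $o_w(1)$).

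\emph{Evaluating $y^{(F)}$ and the final sum.} Write $d_i=u_im_i$ with $m_i$ squarefree and coprime to $u_iW$, and carry out the $(k+1)$-fold sum over $\mathbf m$ one coordinate at a time. The sum in $m_i$ is of the form $\sum_{(m_i,q)=1}\tfrac{\mu(m_i)}{m_i}G\big(\tfrac{\log m_i}{\log R}\big)$ with $q=u_iW$ and $G$ smooth on $[0,1]$ (a suitable partial of $F$, already differentiated in the coordinates treated earlier, sliced along the $t_i$-direction), and for such sums one has the classical estimate
\[
\sum_{\substack{m\\ (m,q)=1}}\frac{\mu(m)}{m}\,G\!\Big(\frac{\log m}{\log R}\Big)=-\frac{q}{\phi(q)}\cdot\frac{G'(0)}{\log R}+o\!\Big(\frac1{\log R}\Big),
\]
obtained by expressing the left side as a contour integral of $\zeta(1+s)^{-1}\prod_{p\mid q}(1-p^{-1-s})^{-1}$ against the Laplace transform of $G$, shifting the contour past $s=0$, and using $\zeta(1+s)^{-1}=s+O(s^2)$ together with $\prod_{p\mid q}(1-p^{-1})^{-1}=q/\phi(q)$. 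Applying this for $i=0,\ldots,k$, and using $(u_i,W)=1$ (so that $\tfrac{u_iW}{\phi(u_iW)}=\tfrac{u_i}{\phi(u_i)}\tfrac W{\phi(W)}$) together with $\prod_i\tfrac{\phi(u_i)}{u_i}\cdot\prod_i\tfrac{u_i}{\phi(u_i)}=1$, yields
\[
y^{(F)}_{u_0,\ldots,u_k}=\big(1+o_w(1)\big)\frac{(-1)^{k+1}}{(\log R)^{k+1}}\Big(\frac{W}{\phi(W)}\Big)^{k+1}\frac{\partial^{k+1}F}{\partial t_0\cdots\partial t_k}\Big(\frac{\log u_0}{\log R},\ldots,\frac{\log u_k}{\log R}\Big).
\]
Substituting this back (the sign $(-1)^{k+1}$ cancels in $y^{(F_1)}_{\mathbf u}y^{(F_2)}_{\mathbf u}$), it remains to evaluate $\sum_{u_0,\ldots,u_k}\big(\prod_i\phi(u_i)\big)^{-1}H\big(\tfrac{\log u_0}{\log R},\ldots,\tfrac{\log u_k}{\log R}\big)$ over squarefree $u_i$ coprime to $W$, where $H$ is the product of the two mixed partials; by $\sum_{u\le X,(u,W)=1}\mu^2(u)/\phi(u)=\tfrac{\phi(W)}{W}\log X+O_W(1)$ and partial summation in each of the $k+1$ variables this equals $\big(\tfrac{\phi(W)}{W}\log R\big)^{k+1}\int_{\R^{k+1}}H\,dt_0\cdots dt_k+o_w(1)\cdot(\mathrm{main})$. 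Collecting the powers of $W/\phi(W)$ and $\log R$ gives precisely the right-hand side of \eqref{maynardid}. The $\phi([d_i,e_i])$-variant is identical with $g(p)=p-2$ in place of $\phi(p)=p-1$, which affects no leading constant.

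\emph{Main difficulty.} The substance lies entirely in the errors: one must check that each of them --- from relaxing the two coprimality conditions, from shifting the contour in the M\"obius-sum estimate, from the Landau-type count of $\sum\mu^2(u)/\phi(u)$, and from the final Riemann-sum approximation --- is genuinely $o_w(1)$, and in particular uniform in the moduli occurring (which can be as large as $R$). This is why $w$ is allowed to grow (slowly) with $N$; the constraint $w\le\log\log\log N$ guarantees that $W$ stays negligible against $R=N^{1/1000}$ throughout.
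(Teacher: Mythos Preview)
The paper does not actually prove this lemma: its entire proof is the one-line citation ``See \cite[Proposition 5]{T} or \cite[Lemma 30]{Pol14}.'' Your proposal is a reasonable sketch of precisely the Selberg--Maynard diagonalisation carried out in those references, so in substance you are reproducing the cited argument rather than departing from it.

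Two small slips to clean up. First, you write that ``$\lambda_{d_0,\ldots,d_k}(F)$ is supported on $d_i$ coprime to $W$''; that is not true from the definition $\lambda_{d_0,\ldots,d_k}(F)=F(\log d_i/\log R)\prod_i\mu(d_i)$ --- the coprimality with $W$ is a summation constraint, not built into $\lambda$. Second, your error bound ``$O\big(\sum_{p>w}p^{-1}\big)$ times the eventual main term'' is not $o_w(1)$, since $\sum_{p>w}p^{-1}$ diverges; the correct heuristic is that a prime $p>w$ shared by two of the $[d_i,e_i]$ forces an extra factor of $p$ into the denominator \emph{twice}, so the relative contribution is $O\big(k^2\sum_{p>w}p^{-2}\big)=O(k^2/w)$, which is indeed $o_w(1)$. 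With these corrections (and the usual care in making the contour-shift and Riemann-sum errors uniform in moduli up to $R$), the sketch is sound and matches \cite{T,Pol14}.
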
\begin{proof}
See \cite[Proposition 5]{T} or \cite[Lemma 30]{Pol14}.
\end{proof}
In this section, we shall establish an analogue of Maynard's sieve method for the exponential sums over primes.
Let $o_w(1)$ denote a quantity which tends to $0$ as $w\to+\infty$.
\begin{proposition}\label{pinhiaqthetaT} Let $\epsilon>0$ be a constant.
Suppose that $1\leq a\leq q\leq N^{\frac13-\epsilon}R^{-2}$ with $(a,q)=1$
and $|\theta|\leq N^{\epsilon-1}$.
If $q\mid W$, then for any $0\leq i\leq k$,
\begin{align}\label{pinhiaqtheta}
&\sum_{\substack{n\sim N\\ 
n\equiv b\pmod{W}}}\varpi(n+h_i)e\bigg((n+h_i)\bigg(\frac aq+\theta\bigg)\bigg)\cdot\Omega_n(F)\notag\\
=&e\bigg(\frac{a(b+h_i)}{q}\bigg)\cdot\frac{1+o_w(1)}{(\log R)^{{k}}}\cdot\frac{W^{{k}}}{\phi(W)^{{k+1}}}\cdot\cJ_i(F)\sum_{n\sim N}e(n\theta),
\end{align}
where
$$
\cJ_i(F)=\int_{\R^{{k}}}\bigg(\frac{\partial^{{k}}F(t_0,\ldots,t_{i-1},0,t_{i+1},\ldots,t_{k})}{\partial t_0\cdots\partial t_{i-1}\partial t_{i+1}\cdots\partial t_{k}}\bigg)^2d t_0\cdots d t_{i-1}d t_{i+1}\cdots d t_{k}.
$$
Otherwise,
\begin{equation}\label{pinhiaqtheta2}
\sum_{\substack{n\sim N\\ 
n\equiv b\pmod{W}}}\varpi(n+h_i)e\bigg((n+h_i)\bigg(\frac aq+\theta\bigg)\bigg)\cdot\Omega_n\ll_\epsilon\frac{1}{w^{1-\epsilon}}\cdot \frac{N}{(\log R)^{{k}}}\cdot\frac{W^{{k}}}{\phi(W)^{{k+1}}}.
\end{equation}
\end{proposition}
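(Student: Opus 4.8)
The plan is to expand $\Omega_n(F)$ into its defining double sum over the sieve variables, interchange the order of summation, and reduce the left side of (\ref{pinhiaqtheta}) to a short linear combination of twisted prime counts in arithmetic progressions; the main term will then come from the prime number theorem in progressions (equivalently, from a Ramanujan-sum evaluation) together with Lemma~\ref{maynard}, and the error from the Bombieri--Vinogradov theorem.

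First I would write $\Omega_n(F)=\sum_{d_0,\dots,d_k,e_0,\dots,e_k}\lambda_{d_0,\dots,d_k}(F)\lambda_{e_0,\dots,e_k}(F)\prod_{j=0}^{k}\1_{[d_j,e_j]\mid n+h_j}$ and interchange the (finite) sums over the $d$'s and $e$'s with the sum over $n$. A term can survive only if $n+h_i$ is prime \emph{and} divisible by $[d_i,e_i]$; since $n+h_i\asymp N$ dwarfs $R^{2}\ge[d_i,e_i]$, this forces $d_i=e_i=1$, which is precisely the origin of the slice $t_i=0$ in $\cJ_i(F)$. With $d_i=e_i=1$ fixed, set $D=\prod_{j\ne i}[d_j,e_j]$; since $F$ is supported on the region (\ref{area}) we have $\prod_j d_j,\prod_j e_j\le R$, so $D\le R^{2}$, and by assumptions (III)--(IV) $D$ is coprime to $W$ and the $[d_j,e_j]$ ($j\ne i$) may be taken pairwise coprime. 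By the Chinese remainder theorem the conditions $n\equiv b\pmod W$ and $[d_j,e_j]\mid n+h_j$ ($j\ne i$) amount to one congruence $n\equiv r\pmod{WD}$ with $(r+h_i,WD)=1$, after which $n+h_i$ is determined modulo $M:=[WD,q]$, and $M\le WqR^{2}\le N^{1/3-\epsilon+o(1)}$ by the hypothesis $q\le N^{1/3-\epsilon}R^{-2}$ together with $W=N^{o(1)}$ (here $w\le\log\log\log N$). Thus the inner sum is $\sum_{n\sim N,\ n\equiv r\,(WD)}\varpi(n+h_i)\,e\big((n+h_i)(a/q+\theta)\big)$: a twisted count of primes in a known residue class to a modulus $M<N^{1/2}$.

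For the main term I would use, via partial summation from $\psi(y;M,c)$, the asymptotic $\sum_{m\le x,\,m\equiv c\,(M)}\varpi(m)e(m\vartheta)=\phi(M)^{-1}\sum_{n\le x}e(n\vartheta)+O\big((1+|\vartheta|x)\,E^{*}(M,c)\big)$ for $(c,M)=1$, where $E^{*}(M,c)=\sup_{y\le x}|\psi(y;M,c)-y/\phi(M)|$. If $q\mid W$ then $M=WD$ and, since $n\equiv b\pmod W$, the twist $e((n+h_i)a/q)$ is the constant $e(a(b+h_i)/q)$; it pulls out, the main term becomes $e(a(b+h_i)/q)\,\phi(WD)^{-1}\sum_{n\sim N}e(n\theta)$ times the two $\lambda$-weights, and summing over the $(d_j,e_j)_{j\ne i}$ is exactly Lemma~\ref{maynard} (with $\phi$ in the denominator) in the $k$ variables $t_j$, $j\ne i$, applied to $F(\dots,0,\dots)$ against itself. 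Factoring $\phi(WD)=\phi(W)\prod_{j\ne i}\phi([d_j,e_j])$ and combining the powers of $\phi(W)$ produces the factor $W^{k}/\phi(W)^{k+1}$ and the integral $\cJ_i(F)$, which is (\ref{pinhiaqtheta}). If $q\nmid W$, then $q$ has a prime factor $p>w$ which either divides $q$ to strictly higher order than it divides $WD$ (so the resulting Ramanujan-type sum over the residues modulo $M$ lying in a fixed class modulo $WD$ either vanishes or carries a factor $\ll1/\phi(p)$), or divides $q/(q,W)$ and hence $D$ (so the sieve sum is restricted to $p\mid D$, costing a factor $\ll1/p$ against the unrestricted Lemma~\ref{maynard} evaluation); since $p>w$, in every case one saves $\ll_\epsilon w^{-1+\epsilon}$, which is (\ref{pinhiaqtheta2}).

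The remaining error, after inserting the above into the expanded sum, is $\ll_\epsilon(1+|\theta|N)\sum_{D\le R^{2}}(3k)^{\omega(D)}\max_{(c,M)=1}E^{*}(M,c)$, the weight $(3k)^{\omega(D)}$ counting the ways to realise $D=\prod_{j\ne i}[d_j,e_j]$. All moduli $M=[WD,q]$ lie below $N^{1/3}\ll N^{1/2}(\log N)^{-A}$, so the Bombieri--Vinogradov theorem applies, the divisor weight being absorbed by Cauchy--Schwarz together with the Brun--Titchmarsh bound $\psi(y;M,c)\ll y(\log y)/\phi(M)$; this makes the $D$-sum of error terms $\ll_A N(\log N)^{-A}$. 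I expect the main obstacle to be doing all of this \emph{uniformly in $\theta$ over the whole range $|\theta|\le N^{\epsilon-1}$}: near the top of that range $\sum_{n\sim N}e(n\theta)$ is only of size $\asymp N^{1-\epsilon}$, so the crude bound $1+|\theta|N\le N^{\epsilon}$ is too lossy and one must instead estimate $\int E(y;M,c)\,e(y\theta)\,dy$ exploiting its own cancellation --- e.g.\ by cutting $[N,2N]$ into $O(1+|\theta|N)$ sub-intervals on which $e(n\theta)$ is essentially constant and applying Bombieri--Vinogradov on each, or via the explicit formula --- before averaging over $M$. Keeping track, simultaneously, of the exact power of $w$ gained in the case $q\nmid W$ is the other place where the bookkeeping is delicate.
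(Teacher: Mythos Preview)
Your overall structure---open $\Omega_n$, force $d_i=e_i=1$, reduce to twisted prime sums in progressions to modulus $WD$, then invoke Lemma~\ref{maynard}---matches the paper's, and your main-term computation in the case $q\mid W$ is correct. The genuine gap is exactly the obstacle you flagged at the end, and neither of your proposed workarounds closes it.

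Standard Bombieri--Vinogradov controls $E^*(M,c)=\sup_{y\le 2N}|\psi(y;M,c)-y/\phi(M)|$ on average in $M$, and partial summation against $e(n\theta)$ then costs the full factor $1+|\theta|N\ll N^{\epsilon}$. Your first fix, cutting $[N,2N]$ into $L\asymp N^{\epsilon}$ sub-intervals on which $e(n\theta)$ is essentially constant, does not recover this loss: on each sub-interval $I_\ell=[a_\ell,b_\ell]$ the error is $|E(b_\ell)-E(a_\ell)|\le 2E^*(M,c)$, and since you must sum these in absolute value (there is no cancellation between the $E_\ell$'s without further input), you again get $\ll N^{\epsilon}E^*(M,c)$, hence $N^{1+\epsilon}(\log N)^{-A}$ after averaging over $M$. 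This exceeds the main term, which near $|\theta|=N^{\epsilon-1}$ is only of size $\asymp N^{1-\epsilon}W^{k}\phi(W)^{-k-1}(\log R)^{-k}$. Your second fix, ``via the explicit formula'', is essentially what is required, but carrying it out uniformly in $\theta$ \emph{and} on average in the modulus is a substantial theorem in its own right: it is precisely the content of the Liu--Zhan result quoted as Lemma~\ref{BVexpsum}, which gives a Bombieri--Vinogradov theorem directly for the exponential sums $\sum_{n\sim x}\varpi(n)e(n(a/q+\theta))$ (uniformly for $|\theta|\le K^{-3}(\log x)^{-B}$). The paper builds Lemma~\ref{piaqthetaL} on top of this, expressing $\sum_{n\sim x,\,n\equiv b(D)}\varpi(n)e(n(a/q+\theta))$ as an explicit main term plus an error controlled by the quantities $\cR_{t,\delta}$ that Lemma~\ref{BVexpsum} bounds on average; no partial-summation loss in $\theta$ ever enters. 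You should invoke Lemma~\ref{BVexpsum} (equivalently \cite{LZ97}) in place of the ordinary Bombieri--Vinogradov theorem.

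A smaller point: your treatment of the case $q\nmid W$ is too informal. The dichotomy ``$p\mid q$ to higher order than $p\mid WD$'' versus ``$p\mid D$'' does not exhaust the possibilities cleanly once $q$ has several prime factors exceeding $w$, and the claimed $1/\phi(p)$ saving needs the condition that $(W,q)$ and $q/(W,q)$ be coprime (otherwise the main term already vanishes). The paper handles this systematically by first noting the main term is zero unless $W\in\cZ_q$, then stratifying the sieve sum according to the tuples $\big((d_j,q),(e_j,q)\big)_{j\neq i}$ lying in a set $\fX_q$ of divisors of $q_*:=q/(W,q)$; Lemma~\ref{maynard} is applied to each stratum, and the final saving $\tau(q_*)^{2k}q_*^{k}/\phi(q_*)^{k+1}\ll_{\epsilon} q_*^{-1+\epsilon}\le w^{-1+\epsilon}$ comes from $q_*>w$.
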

Below we shall fix $F(t_0,\ldots,t_k)$ as a smooth function whose support lies on (\ref{area}). For convenience, abbreviate $\lambda_{d_0,\ldots,d_k}(F)$, $\Omega_{n}(F)$ and $\cJ_i(F)$ as $\lambda_{d_0,\ldots,d_k}$, $\Omega_{n}$ and $\cJ_i$ respectively.

The following lemma is an analogue of the Bombieri-Vinogradov theorem for the exponential sums over primes, which was proved by Liu and Zhan \cite[Theorem 3]{LZ97}.
\begin{lemma}\label{BVexpsum}
For any $A>0$, there exists $B=B(A)>0$ such that
$$
\sum_{q\leq K}\max_{(a,q)=1}\max_{|\theta|\leq\delta}\bigg|
\sum_{n\sim x}\varpi(n)e\bigg(n\bigg(\frac aq+\theta\bigg)\bigg)-\frac{\mu(q)}{\phi(q)}\sum_{n\sim x}e(n\theta)\bigg|\ll \frac{x}{\log^A x},
$$
where $1\leq K\leq x^{\frac13}\log^{-B}x$ and $\delta=K^{-3}\log^{-B}x$.
\end{lemma}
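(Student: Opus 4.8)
The statement is the Bombieri--Vinogradov theorem for additively twisted prime sums, and the plan is to run the classical proof of Bombieri--Vinogradov while carrying the twist $e(n\theta)$ along, the point being that the hypothesis pins $|\theta|$ down tightly enough that the twist costs only powers of $\log x$.

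First I would replace $\varpi$ by the von Mangoldt function $\Lambda$: the two differ only on prime powers $p^j$ with $j\ge 2$, whose total contribution to the left-hand side is $\ll K\sqrt x\log x\ll x^{5/6}$ and is absorbed in $x/\log^A x$. Next I would detach the main term by expanding into Dirichlet characters. Since $\Lambda(n)\ne 0$ and $(n,q)>1$ force $n\mid q$, the terms with $(n,q)>1$ contribute $\ll K$ in total; for $(n,q)=1$ one uses the Gauss-sum identity $\1_{(n,q)=1}e(na/q)=\phi(q)^{-1}\sum_{\chi\bmod q}\chi(a)\tau(\bar\chi)\chi(n)$, and the principal character produces
$$
\frac{\mu(q)}{\phi(q)}\sum_{\substack{n\sim x\\(n,q)=1}}\Lambda(n)e(n\theta)=\frac{\mu(q)}{\phi(q)}\sum_{n\sim x}\Lambda(n)e(n\theta)+O\Big(\tfrac{(\log x)^2}{\phi(q)}\Big).
$$
The difference of this from the asserted main term is $\mu(q)\phi(q)^{-1}\sum_{n\sim x}(\Lambda(n)-1)e(n\theta)$; summed over $q\le K$ it is $\ll(\log x)\max_{|\theta|\le\delta}|\sum_{n\sim x}(\Lambda(n)-1)e(n\theta)|$, and by the prime number theorem together with the oscillatory-integral estimates described below this is $\ll x\exp(-c\sqrt{\log x})+\sqrt{|\theta|}\,x(\log x)^{O(1)}\ll x/\log^A x$ once $B$ is large. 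Thus the lemma is reduced to the bound
$$
\sum_{q\le K}\frac1{\phi(q)}\sum_{\substack{\chi\bmod q\\ \chi\ne\chi_0}}|\tau(\bar\chi)|\,\max_{|\theta|\le\delta}\Big|\sum_{n\sim x}\Lambda(n)\chi(n)e(n\theta)\Big|\ll\frac x{\log^A x}.
$$

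For the inner sum I would use the explicit formula for $L(s,\chi)$ (for $\chi$ induced by a primitive character of conductor $d$), writing $\sum_{n\sim x}\Lambda(n)\chi(n)e(n\theta)=-\sum_{|\gamma_\chi|\le T}\int_x^{2x}e(t\theta)t^{\rho-1}\,dt+O\!\big(x\log^2(qxT)/T+\text{(ramified factors)}\big)$, where $\rho=\beta+i\gamma$ runs over the nontrivial zeros. The integrals are estimated by examining the phase $\gamma/t+2\pi\theta$: away from its zero one gets $\ll x^{\beta}/|\gamma|$ by repeated integration by parts, while near it stationary phase gives a contribution which, summed over the $O(|\theta|x\log(qx))$ relevant zeros, is $\ll\sqrt{|\theta|}\,x(\log x)^{O(1)}$ (the bulk coming from critical-line zeros, the rest controlled by the zero-free region and the density estimate). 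Reducing to primitive characters in the usual way ($\psi$-type sums for $\chi$ and for the inducing $\chi^{*}$ differ by $O((\log x)^2)$, $|\tau(\bar\chi)|\le\sqrt d$, $\sum_{q\le K,\,d\mid q}\sqrt d/\phi(q)\ll \sqrt d\log x/\phi(d)$, and $\sqrt d/\phi(d)\le d/\phi(d)$), the non-stationary part of the estimate is dominated by $(\log x)^{O(1)}\sum_{d\le K}\frac d{\phi(d)}\sum^{*}_{\chi\bmod d}\sum_{|\gamma_\chi|\le T}x^{\beta}/|\gamma|$, which is $\ll_A x/\log^A x$ exactly by the classical Bombieri--Vinogradov mechanism (zero-free region plus the large-sieve density estimate $\sum_{d\le D}\sum^{*}_\chi N(\sigma,T,\chi)\ll(D^2T)^{c(1-\sigma)}\log^{O(1)}$), valid since $K\le x^{1/3}\log^{-B}x$ lies well inside the admissible range. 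The stationary part, summed over $q\le K$ with $\sum_{\chi\bmod q}|\tau(\bar\chi)|\le\phi(q)\sqrt q$, is $\ll\sqrt\delta\,x(\log x)^{O(1)}\sum_{q\le K}\sqrt q\ll \sqrt\delta\,K^{3/2}x(\log x)^{O(1)}$; here the choice $\delta=K^{-3}\log^{-B}x$ is decisive, since $\sqrt\delta\,K^{3/2}=(\log x)^{-B/2}$ kills the growth and leaves $x(\log x)^{O(1)-B/2}$, negligible. Choosing $B=B(A)$ large enough to swallow every $(\log x)^{O(1)}$ and $(\log x)^{O(B)}$ factor completes the proof. (One may equally run Vaughan's identity in place of the explicit formula, with the P\'olya--Vinogradov inequality for the Type~I pieces and the multiplicative large sieve for the Type~II pieces; the $\theta$-analysis is the same.)

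The main obstacle, and the only genuinely new point compared with the classical theorem, is the uniform treatment of the continuous parameter $\theta$. Bounding the twist by its total variation gives a factor $1+|\theta|x$, which is useless when $K$ is small; one must instead exploit the oscillation of $e(t\theta)$ itself through stationary-phase/van der Corput estimates for the integrals $\int_x^{2x}e(t\theta)t^{\rho-1}\,dt$, revealing that the twist in fact costs only a factor of order $\sqrt{|\theta|}$, and then the precise window $|\theta|\le\delta=K^{-3}\log^{-B}x$ is exactly what makes $\sqrt{\delta}$ cancel the $\sum_{q\le K}\sqrt q$-type loss coming from the Gauss sums. Everything else---the character expansion, the reduction to primitive characters, and the zero-density (or bilinear) estimates---is a lengthy but routine transcription of the standard Bombieri--Vinogradov machinery, with all the bookkeeping measured in powers of $\log x$, which is why $B$ must be taken sufficiently large in terms of $A$.
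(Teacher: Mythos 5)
Your proposal should first be set against what the paper actually does: the paper gives no proof of this lemma at all, quoting it directly from Liu and Zhan \cite{LZ97} (their Theorem 3), so your attempt is necessarily a different route, and the question is whether your sketch would close. Its skeleton — Gauss-sum expansion into Dirichlet characters, reduction to primitive characters, the explicit formula with the oscillatory integrals $\int_x^{2x}e(t\theta)t^{\rho-1}\,dt$, the stationary point at $|\gamma|\asymp|\theta|x$, and the decisive cancellation $\sqrt{\delta}\,K^{3/2}=\log^{-B/2}x$ against the Gauss-sum loss $\sum_{q\le K}\sqrt q$ — is indeed the standard mechanism behind results of this type, and the numerology, including the $x^{1/3}$ barrier, is right.

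There is, however, a genuine gap at the stationary-zero step. You assert, per character, that the zeros with $|\gamma|\asymp|\theta|x$ contribute $\ll\sqrt{|\theta|}\,x(\log x)^{O(1)}$, with ``the rest controlled by the zero-free region and the density estimate'', and your final bookkeeping simply multiplies this per-character bound by $\sum_{\chi\bmod q}|\tau(\bar\chi)|\le\phi(q)\sqrt q$. Unconditionally no such per-character bound exists: a zero with $\beta$ near $1$ at height $\asymp|\theta|x$ (for instance when $|\theta|x\asymp 1$, where the stationary bound degenerates to the trivial $x^{\beta}$ — the Siegel-zero scenario) contributes essentially $x$ for an individual modulus that is a power of $x$, and the classical zero-free region saves only a constant factor there; moreover you cannot escape by truncating low, since the truncation error of the explicit formula is multiplied by $1+|\theta|x$, which forces $T\gtrsim\delta x\log^{A+2}x$ rather than the Siegel--Walfisz choice $T=\exp(\sqrt{\log x})$. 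So the off-critical-line zeros at stationary height must be treated exactly like your non-stationary part: by the modulus-averaged large-sieve density estimate together with Page/Siegel for the at most one exceptional character (the weight $\sqrt d/\phi(d)\approx d^{-1/2}$ rescues large exceptional moduli, Siegel the small ones). That averaged computation does go through with your numerology (at $\sigma=1/2$ it yields $\asymp x\log^{-B/2}x$, and $\sigma>1/2$ is admissible because $K^{2}\delta x\le x$), so the gap is fixable, but as written the pieces do not assemble into a proof. A secondary overclaim: the closing remark that Vaughan's identity works ``with the same $\theta$-analysis'' is not a routine transcription, since stripping $e(n\theta)$ from Type I/II sums by partial summation again costs $1+|\theta|x$; one needs a hybrid (character $\times$ continuous frequency) large sieve or a Gallagher-type maximal inequality, which is a genuinely different tool.
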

Define
\begin{equation}\label{Rqdelta}
\cR_{q,\delta}(x)=\max_{\substack{1\leq a\leq q\\ (a,q)=1}}\max_{|\theta|\leq\delta}\bigg|
\sum_{n\sim x}\varpi(n)e\bigg(n\bigg(\frac aq+\theta\bigg)\bigg)-\frac{\mu(q)}{\phi(q)}\sum_{n\sim x}e(n\theta)\bigg|.
\end{equation}

For an assertion $P$, define
$$
\1_P=\begin{cases}1,&\text{the assertion }P\text{ holds},\\
0,&\text{otherwise}.
\end{cases}
$$
For example, $\1_A(x)=\1_{x\in A}$. Let
$$
\cZ_q=\{d:\,(d,q)\text{ and }q/(d,q)\text{ are co-prime}\}.
$$
\begin{lemma} \label{piaqthetaL}
Suppose that $1\leq a\leq q$ and $(a,q)]=1$. Let $D$ be a positive integer with $(b,D)=1$. 
Then
\begin{align}\label{piaqtheta}
&\sum_{\substack{n\sim x\\ n\equiv b\pmod{D}}}\varpi(n)e\bigg(n\bigg(\frac aq+\theta\bigg)\bigg)\notag\\
=&\mu\bigg(\frac{q}{(D,q)}\bigg)e\bigg(\frac{ab\cdot\bar{v}_{D,q}}{(D,q)}\bigg)\cdot\frac{\1_{D\in\cZ_q}}{\phi([D,q])}\sum_{\substack{n\sim x}}e(n\theta)+O\bigg(\sum_{t\mid [D,q]}\frac{(D,t)}D\cdot\cR_{t,\delta}(x)\bigg)
\end{align}
for any $\theta$ with $|\theta|\leq\delta$, where $\bar{v}_{D,q}$ is an integer such that
$$
\bar{v}_{D,q}\cdot\frac{q}{(D,q)}\equiv1\pmod{(D,q)}
$$
if $D\in\cZ_q$, and $\bar{v}_{D,q}=0$ otherwise.
\end{lemma}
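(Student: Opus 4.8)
The plan is to combine the congruence $n\equiv b\pmod{D}$ with the rational phase $e(na/q)$ into congruences with the single modulus $[D,q]$. Detecting $n\equiv b\pmod{D}$ by additive characters modulo $D$ gives
$$
\sum_{\substack{n\sim x\\ n\equiv b\pmod{D}}}\varpi(n)e\bigg(n\bigg(\frac aq+\theta\bigg)\bigg)
=\frac1D\sum_{r=0}^{D-1}e\bigg(-\frac{rb}{D}\bigg)\sum_{n\sim x}\varpi(n)e\bigg(n\bigg(\frac aq+\frac rD+\theta\bigg)\bigg).
$$
For each $r$ I reduce the fraction $a/q+r/D=(aD+rq)/(qD)$ to lowest terms $a_r/t_r$, so that $(a_r,t_r)=1$ and $t_r=qD/(aD+rq,qD)$; since $(D,q)$ divides both $aD+rq$ and $qD$, the modulus $t_r$ divides $[D,q]$. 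Because $|\theta|\le\delta$, the definition (\ref{Rqdelta}) of $\cR_{t_r,\delta}(x)$ gives
$$
\sum_{n\sim x}\varpi(n)e\big(n(a_r/t_r+\theta)\big)=\frac{\mu(t_r)}{\phi(t_r)}\sum_{n\sim x}e(n\theta)+O\big(\cR_{t_r,\delta}(x)\big),
$$
so the sum in question equals $M\sum_{n\sim x}e(n\theta)+O\big(D^{-1}\sum_{r=0}^{D-1}\cR_{t_r,\delta}(x)\big)$, where $M=D^{-1}\sum_{r=0}^{D-1}e(-rb/D)\mu(t_r)/\phi(t_r)$.

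For the error term I would sort the $r$'s by the value of $t_r$. For a divisor $t$ of $[D,q]$ one has $t_r=t$ only if $qD/t\mid aD+rq$, a linear congruence in $r$ whose solution set modulo $D$, when nonempty, is an arithmetic progression of common difference $D/(D,t)$; hence $\#\{0\le r<D:\,t_r=t\}\le(D,t)$, and therefore $D^{-1}\sum_{r=0}^{D-1}\cR_{t_r,\delta}(x)\le\sum_{t\mid[D,q]}\frac{(D,t)}{D}\cR_{t,\delta}(x)$, which is the stated remainder.

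It remains to identify the main coefficient $M$. The cleanest way is to recognize $M$ as the coefficient one obtains by partitioning the progression $n\equiv b\pmod{D}$ into residue classes modulo $[D,q]$ (on which $e(na/q)$ is constant) and inserting the main term $1/\phi([D,q])$ for the number of primes in each invertible class; that is,
$$
M=\frac{1}{\phi([D,q])}\sum_{\substack{c\bmod[D,q]\\ c\equiv b\pmod{D},\ (c,[D,q])=1}}e\bigg(\frac{ca}{q}\bigg).
$$
This complete exponential sum factors by the Chinese Remainder Theorem into local sums over the primes $p\mid[D,q]$. At a prime with $1\le v_p(D)<v_p(q)$ the local sum is a complete geometric sum with nontrivial frequency and so vanishes — this is precisely the case $D\notin\cZ_q$, forcing $M=0$. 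When $D\in\cZ_q$, each prime dividing $q/(D,q)$ contributes a Ramanujan sum equal to $\mu(p^{v_p(q)})$, together giving $\mu(q/(D,q))$; each prime dividing $D$ but not $q$ contributes $1$; and each prime dividing $(D,q)$ pins $c$ to one residue and contributes a phase which, after unwinding the Chinese Remainder coefficients of $1/q$ against those of $1/(D,q)$, assembles to $e\big(ab\,\bar v_{D,q}/(D,q)\big)$, where $\bar v_{D,q}$ is the inverse of $q/(D,q)$ modulo $(D,q)$. Dividing the product of these local factors by $\phi([D,q])=\prod_p\phi\big(p^{v_p([D,q])}\big)$ yields the asserted expression.

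The main obstacle is the bookkeeping in this last step: one must track simultaneously the $p$-adic valuations of the reduced modulus $t_r$, of $(D,q)$, and of $q/(D,q)$, and check that the several local phases coming from the primes of $(D,q)$ combine into the single factor $e\big(ab\bar v_{D,q}/(D,q)\big)$. No individual local computation is hard, but aligning the two Chinese Remainder decompositions is where care is needed; everything else reduces either to the definition of $\cR_{t,\delta}$ or to the elementary counting above.
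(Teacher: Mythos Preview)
Your approach is essentially the paper's: the same additive-character detection of the congruence, the same reduction of $a/q+r/D$ to $a_r/t_r$, the same counting $\#\{r:t_r=t\}\le(D,t)$ for the error, and the same recognition of $M$ as the character sum $\phi([D,q])^{-1}\sum_{c}e(ca/q)$ coming from partitioning into classes mod $[D,q]$.

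The one place where the paper is more explicit than your sketch is the justification that $M$ actually equals that character sum. Rather than asserting it, the paper fixes $D,q$, takes $x$ very large, sets $\theta=0$, and computes the original sum two ways: once by your additive-character route (getting $M\cdot x+o(x)$, using Lemma~\ref{BVexpsum} to kill the $\cR_{t,\delta}$ terms) and once by Siegel--Walfisz after splitting into residues mod $q$ (getting $x/\phi([D,q])$ times the character sum). Equating the two asymptotics pins down $M$. Your phrase ``inserting the main term $1/\phi([D,q])$'' is this argument in disguise, but the comparison step deserves to be spelled out. For the evaluation of the character sum itself, the paper writes $c=ut+b$ with $u=(D,q)$ and sums directly rather than factoring via CRT; both computations land on the same answer.
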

\begin{proof} Clearly
$$
\sum_{\substack{n\sim x\\ n\equiv b\pmod{D}}}\varpi(n)e\bigg(n\bigg(\frac aq+\theta\bigg)\bigg)=\frac1D\sum_{r=1}^De\bigg(-\frac{br}{D}\bigg)\sum_{\substack{n\sim x}}\varpi(n)e\bigg(n\bigg(\frac rD+\frac aq+\theta\bigg)\bigg).
$$
Write
$$
\frac{r}{D}+\frac aq=\frac{s_{r}}{t_{r}},
$$
where $t_{r}$ is a common multiple of $D$ and $q$ and $(s_{r},t_{r})=1$. 
Then in view of (\ref{Rqdelta}),
\begin{align}\label{bDaq1}
&\sum_{\substack{n\sim x\\ n\equiv b\pmod{D}}}\varpi(n)e\bigg(n\bigg(\frac aq+\theta\bigg)\bigg)\notag\\
=&
\frac1D\sum_{r=1}^D\frac{\mu(t_r)}{\phi(t_r)}\cdot e\bigg(-\frac{br}{D}\bigg)\sum_{n\sim x}e(n\theta)+O\bigg(\sum_{t\mid [D,q]}\frac{M_t\cR_{t,\delta}(x)}D\bigg),
\end{align}
where
$$
M_t=|\{1\leq r\leq D:\,t_r=t\}|.
$$
On the other hand, by the Siegel-Walﬁsz theorem, for any $A>0$, we have
\begin{align}\label{bDaq2}
\sum_{\substack{n\sim x\\ n\equiv b\pmod{D}}}\varpi(n)e\bigg(n\cdot\frac aq\bigg)=
&\sum_{\substack{1\leq c\leq q\\ (c,q)=1}}
e\bigg(\frac{ac}q\bigg)\sum_{\substack{n\sim x\\ n\equiv b\pmod{D}\\
n\equiv c\pmod{q}}}\varpi(n)\notag\\
=&\frac{x}{\phi([D,q])}\sum_{\substack{1\leq c\leq q,\ (c,q)=1\\ c\equiv b\pmod{(D,q)}}}
e\bigg(\frac{ac}q\bigg)+O\bigg(\frac{qx}{(\log x)^A}\bigg),
\end{align}
whenever $x\geq e^{[D,q]}$.
Write $u=(D,q)$ and $v=q/u$. We have
\begin{align*}
\sum_{\substack{1\leq c\leq q,\ (c,q)=1\\ c\equiv b\pmod{(D,q)}}}
e\bigg(\frac{ac}q\bigg)=&\sum_{d\mid q}\mu(d)\sum_{\substack{1\leq c\leq q,\ d\mid c\\ c\equiv b\pmod{u}}}e\bigg(\frac{ac}q\bigg)\\
=&\sum_{d\mid q}\mu(d)\sum_{\substack{1\leq t\leq v\\ 
ut+b\equiv 0\pmod{d}}}e\bigg(\frac{a(ut+b)}q\bigg).
\end{align*}
Since $(D,b)=1$ and $u\mid D$, $ut+b\equiv0\pmod{d}$ for some integer $t$ if and only if $(u,d)=1$ and $d\mid v$. 
It is easy to see that
$$
\sum_{\substack{1\leq t\leq v\\ 
ut+b\equiv 0\pmod{d}}}e\bigg(\frac{a(ut+b)}q\bigg)=0
$$
unless $(u,v)=1$ and $d=v$. Assume that $(u,v)=1$. Let $\bar{v}_u$ be an integer with $\bar{v}_uv\equiv1\pmod{u}$. When $ut+b\equiv0\pmod{v}$,
$$
\frac{ut+b}{v}\equiv(ut+b)\bar{v}_u\equiv b\bar{v}_u\pmod{u}.
$$
Thus we have
\begin{align*}
\sum_{\substack{1\leq c\leq q,\ (c,q)=1\\ c\equiv b\pmod{(D,q)}}}
e\bigg(\frac{ac}q\bigg)
=\begin{cases}\mu(v)e(ab\bar{v}_u/u),&\text{if }(u,v)=1,\\
0,&\text{otherwise}.\end{cases}
\end{align*}
Fix $q$ and $D$, and let $x\geq e^{[D,q]}$. By Lemma \ref{BVexpsum}, we have $\cR_{t,\delta}(x)\ll x(\log x)^{-A}$ for any $t\leq [D,q]$ provided that $\delta$ is sufficiently small.
Thus setting $\theta=0$ in (\ref{bDaq1}) and comparing (\ref{bDaq1}) and (\ref{bDaq2}), we get
$$
\frac1D\sum_{r=1}^D\frac{\mu(t_r)}{\phi(t_r)}\cdot e\bigg(-\frac{br}{D}\bigg)=\frac{\mu(v)e\big(\frac{ab\bar{v}_u}{u}\big)\cdot\1_{(u,v)=1}}{\phi([D,q])}.
$$

Suppose that $t$ is a divisor of $[D,q]$ and $t_r=t$. Note that
$$
\frac{r}{D}+\frac{a}{q}=\frac{rq+a D}{Dq}=\frac{s_r}{t}.
$$
Thus $r$ must satisfy the congruence
$$
rq+aD\equiv0\pmod{Dq/t}.
$$
So
$$
M_t\leq\frac{(Dq/t,q)}{Dq/t}\cdot D=(D,t).
$$
(\ref{piaqtheta}) is concluded.
\end{proof}

We are ready to give the proof of Proposition \ref{pinhiaqthetaT}. Clearly we only need to consider the case $i=0$.
Let $S_W=\{d\in\N:\,(d,W)=1\}$.
By Lemma \ref{piaqthetaL}, we have
\begin{align*}
&\sum_{\substack{n\sim N\\ 
n\equiv b\pmod{W}}}\varpi(n+h_0)e\bigg((n+h_0)\bigg(\frac aq+\theta\bigg)\bigg)\cdot\bigg(\sum_{d_i\mid n+h_i}\lambda_{d_0,\ldots,d_{k}}\bigg)^2\\
=&\sum_{\substack{d_1,\ldots,d_{k},e_1,\ldots,e_{k}\in S_W\\
[d_1,e_1],\ldots,[d_{k},e_{k}]\text{ coprime}}}\lambda_{1,d_1,\ldots,d_{k}}\lambda_{1,e_1,\ldots,e_{k}}
\sum_{\substack{N+h_0\leq n\leq 2N+h_0\\ 
n\equiv b+h_0\pmod{W}\\
n\equiv h_0-h_i\pmod{[d_i,e_i]}}}\varpi(n)e\bigg(n\bigg(\frac aq+\theta\bigg)\bigg)\\
=&\sum_{n\sim N}e(n\theta)\cdot\sum_{\substack{d_1,\ldots,d_{k},e_1,\ldots,e_{k}\in S_W\\
[d_1,e_1],\ldots,[d_{k},e_{k}]\text{ coprime}}}
\frac{\lambda_{1,d_1,\ldots,d_{k}}\lambda_{1,e_1,\ldots,e_{k}}\cdot\Delta_{d_1,e_1,\ldots,d_{k},e_{k}}\cdot\1_{[W,d_1,e_1,\ldots,d_{k},e_{k}]\in\cZ_q}}{\phi([W,d_1,e_1,\ldots,d_{k},e_{k},q])}\\
&+O\bigg(\sum_{\substack{d_1,\ldots,d_{k},e_1,\ldots,e_{k}\in S_W\\
d_1\cdots d_{k},e_1\cdots e_{k}\leq R\\
[d_1,e_1],\ldots,[d_{k},e_{k}]\text{ coprime}}}\sum_{t\mid [W,d_1,e_1,\ldots,d_{k},e_{k},q]}\frac{([W,d_1,e_1,\ldots,d_{k},e_{k}],t)}{[W,d_1,e_1,\ldots,d_{k},e_{k}]}\cdot \cR_{t,\delta}(N)\bigg),
\end{align*}
where
$$
\Delta_{d_1,e_1,\ldots,d_{k},e_{k}}=
\mu\bigg(\frac{q}{([W,d_1,e_1,\ldots,d_{k},e_{k}],q)}\bigg)\cdot e\bigg(\frac{a(b+h_0)\cdot\bar{v}_{[W,d_1,e_1,\ldots,d_{k},e_{k}],q}}{([W,d_1,e_1,\ldots,d_{k},e_{k}],q)}\bigg).
$$

We firstly consider the remainder term.
Assume that $t\leq qWR^2$. We have
\begin{align*}
\sum_{\substack{1\leq D\leq N}}\tau(D)^{2k}\cdot\frac{(D,t)}{D}\leq
\sum_{s\mid t}\tau(s)^{2k}\sum_{\substack{1\leq d\leq N/s}}\frac{\tau(d)^{2k}}{d}\ll
\tau(t)^{2k+1}\cdot(\log N)^{4^{k}},
\end{align*}
where we use the known result
$$
\sum_{1\leq d\leq x}\tau(d)^k\ll x(\log x)^{2^k-1}.
$$
It follows from Lemma \ref{BVexpsum} with $K=N^{\frac13-\frac\epsilon{2}}$ that
\begin{align*}
&\sum_{\substack{d_1,\ldots,d_{k},e_1,\ldots,e_{k}\in S_W\\
d_1\cdots d_{k},e_1\cdots e_{k}\leq R\\
[d_1,e_1],\ldots,[d_{k},e_{k}]\text{ coprime}}}\sum_{t\mid [W,d_1,e_1,\ldots,d_{k},e_{k},q]}\frac{([W,d_1,e_1,\ldots,d_{k},e_{k}],t)}{[W,d_1,e_1,\ldots,d_{k},e_{k}]}\cdot \cR_{t,\delta}(N)\\
\ll&\sum_{t\leq N^{\frac13-\frac\epsilon{2}}}\cR_{t,\delta}(N)\sum_{\substack{1\leq D\leq N}}\tau(D)^{2k}\cdot\frac{(D,t)}{D}
\\
\ll&(\log N)^{4^{k}}\sum_{t\leq N^{\frac13-\frac\epsilon{2}}}\tau(t)^{2k+1}\cR_{t,\delta}(N)
\ll\frac{N}{(\log N)^{k+2}}.
\end{align*}

Let us turn to the main term. Evidently the main term will vanish if $W\not\in\cZ_q$. Below we assume that $W\in\cZ_q$.
Let $\fX_q$ be the set of all those $(\fd_0,\fe_0,\ldots,\fd_{k},\fe_{k})$  satisfying that

\medskip(i) $\fd_0,\fe_0,\ldots,\fd_{k},\fe_{k}$ are the divisors of $q$;

\medskip(ii) $\fd_0,\fe_0,\ldots,\fd_{k},\fe_{k}$ are square-free;

\medskip(iii) $\fd_0,\fe_0,\ldots,\fd_{k},\fe_{k}\in\cZ_q$;

\medskip(iv) $W,[\fd_0,\fe_0],\ldots,[\fd_{k},\fe_{k}]$ are co-prime.\medskip

Suppose that $(\fd_0,\fe_0,\ldots,\fd_{k},\fe_{k})\in\fX_q$.
\begin{align*}
&\sum_{\substack{d_1,\ldots,d_{k},e_1,\ldots,e_{k}\in S_W\\
[d_1,e_1],\ldots,[d_{k},e_{k}]\text{ coprime}\\
d_1,\ldots,d_{k},e_1,\ldots,e_{k}\in\cZ_q\\
(d_i,q)=\fd_i,\ (e_i,q)=\fe_i
}}\frac{\lambda_{1,d_1,\ldots,d_{k}}\lambda_{1,e_1,\ldots,e_{k}}}{\phi([d_0,e_0]\cdots[d_{k},e_{k}])}\\
=&\sum_{\substack{(d_1^*,\ldots,d_{k}^*,e_1^*,\ldots,e_{k}^*)\in S_{[W,q]}\\
[d_1^*,e_1^*],\ldots,[d_{k}^*,e_{k}^*]\text{ coprime}\\}}\frac{\lambda_{1,\fd_1d_1^*,\ldots,\fd_{k}d_{k}^*}\lambda_{1,\fe_1e_1^*,\ldots,\fe_{k}e_{k}^*}}{\phi([\fd_1,\fe_1][d_1^*,e_1^*]\cdots[\fd_{k},\fe_{k}][d_{k}^*,e_{k}^*])}\\
=&\prod_{j=1}^{k}\frac{\mu(\fd_j)\mu(\fe_j)}{\phi([\fd_j,\fe_j])}\sum_{\substack{(d_1^*,\ldots,d_{k}^*,e_1^*,\ldots,e_{k}^*)\in S_{[W,q]}\\
W,[d_1^*,e_1^*],\ldots,[d_{k}^*,e_{k}^*]\text{ coprime}}}\frac{\lambda_{1,d_1^*,\ldots,d_{k}^*}(F_{\fd_1,\ldots,\fd_{k}})\lambda_{1,e_1^*,\ldots,e_{k}^*}(F_{\fe_1,\ldots,\fe_{k}})}{\phi([d_1^*,e_1^*]\cdots[d_{k}^*,e_{k}^*])},
\end{align*}
where
$$
F_{\fd_1,\ldots,\fd_{k}}(t_1,\ldots,t_{k})=F\bigg(0,t_1+\frac{\log\fd_1}{\log R},\ldots,t_{k}+\frac{\log\fd_{k}}{\log R}\bigg).
$$
According to Lemma \ref{maynard},
\begin{align*}
&\sum_{\substack{(d_1^*,\ldots,d_{k}^*,e_1^*,\ldots,e_{k}^*)\in S_{Wq}\\
W,[d_1^*,e_1^*],\ldots,[d_{k}^*,e_{k}^*]\text{ are coprime}}}\frac{\lambda_{1,d_1^*,\ldots,d_{k}^*}(F_{\fd_1,\ldots,\fd_{k}})\lambda_{1,e_1^*,\ldots,e_{k}^*}(F_{\fe_1,\ldots,\fe_{k}})}{\phi([d_1^*,e_1^*]\cdots[d_{k}^*,e_{k}^*])}\\
=&\frac{1+o(1)}{(\log R)^{{k}}}\cdot\frac{[W,q]^{{k}}}{\phi([W,q])^{{k}}}\int_{\R^{{k}}}\frac{\partial^{{k}}F_{\fd_1,\ldots,\fd_{k}}}{\partial t_1\cdots\partial t_{k}}\cdot\frac{\partial^{{k}}F_{\fe_1,\ldots,\fe_{k}}}{\partial t_1\cdots\partial t_{k}}d t_1\cdots d t_{k}.
\end{align*}

Suppose that $q\nmid W$. 
Letting
$$
\Theta_F=\max_{0\leq t_1,\ldots,t_{k}\leq 1}\bigg|\frac{\partial^{{k}}F(0,t_1,\ldots,t_{k})}{\partial t_1\cdots\partial t_{k}}\bigg|,
$$
we get
\begin{align*}
\bigg|\sum_{\substack{d_1,\ldots,d_{k},e_1,\ldots,e_{k}\in S_W\\
[d_1,e_1],\ldots,[d_{k},e_{k}]\text{ coprime}\\
d_1,\ldots,d_{k},e_1,\ldots,e_{k}\in\cZ_q\\
(d_i,q)=\fd_i,\ (e_i,q)=\fe_i
}}\frac{\lambda_{1,d_1,\ldots,d_{k}}\lambda_{1,e_1,\ldots,e_{k}}}{\phi([d_0,e_0]\cdots[d_{k},e_{k}])}\bigg|\leq\prod_{j=1}^{k}\frac{1}{\phi([\fd_j,\fe_j])}\cdot\frac{1+o_w(1)}{(\log R)^{{k}}}\cdot\frac{[W,q]^{{k}}}{\phi([W,q])^{{k}}}\cdot \Theta_F^2.
\end{align*}
Thus
\begin{align*}
&\sum_{\substack{d_1,\ldots,d_{k},e_1,\ldots,e_{k}\in S_W\\
d_1,e_1,\ldots,d_{k},e_{k}\in\cZ_q\\
[d_1,e_1],\ldots,[d_{k},e_{k}]\text{ coprime}}}
\Delta_{d_1,e_1,\ldots,d_{k},e_{k}}
\cdot\frac{\lambda_{1,d_1,\ldots,d_{k}}\lambda_{1,e_1,\ldots,e_{k}}}{\phi([W,d_1,e_1,\ldots,d_{k},e_{k},q])}\\
=&\sum_{\substack{\fd_1,\fe_1,\ldots,\fd_{k},\fe_{k}\in\fX_q}}\frac{\Delta_{d_1,e_1,\ldots,d_{k},e_{k}}}{\phi\big(\frac{[W,q]}{[\fd_1,\fe_1]\cdots[\fd_{k},\fe_{k}]}\big)}
\sum_{\substack{d_1,\ldots,d_{k},e_1,\ldots,e_{k}\in S_W\\
[d_1,e_1],\ldots,[d_{k},e_{k}]\text{ are coprime}\\
d_1,\ldots,d_{k},e_1,\ldots,e_{k}\in\cZ_q\\
(d_i,q)=\fd_i,\ (e_i,q)=\fe_i
}}\frac{\lambda_{1,d_1,\ldots,d_{k}}\lambda_{1,e_1,\ldots,e_{k}}}{\phi([d_1,e_1]\cdots[d_{k},e_{k}])}\\
\ll&\sum_{\substack{\fd_1,\fe_1,\ldots,\fd_{k},\fe_{k}\in\fX_q}}\frac{1}{\phi\big(\frac{[W,q]}{[\fd_1,\fe_1]\cdots[\fd_{k},\fe_{k}]}\big)}
\cdot\prod_{j=1}^{k}\frac{1}{\phi([\fd_j,\fe_j])}\cdot\frac{1}{(\log R)^{{k}}}\cdot\frac{[W,q]^{{k}}}{\phi([W,q])^{{k}}}\cdot \Theta_F^2\\
=&\frac{\Theta_F^2}{(\log R)^{{k}}}\cdot\frac{[W,q]^{k}}{\phi([W,q])^{k+1}}\sum_{\substack{\fd_1,\fe_1,\ldots,\fd_{k},\fe_{k}\in\fX_q}}1.
\end{align*}

Let $q_*=q/(W,q)$. Since $q\nmid W$ and $W\in\cZ_q$, $q_*$ must have at least one prime factor greater than $w$, i.e., $q_*>w$. And $\fd_1,\fe_1,\ldots,\fd_{k},\fe_{k}\in\fX_q$ implies that $\fd_1,\fe_1,\ldots,\fd_{k},\fe_{k}$ all divide $q_*$. Hence
\begin{align*}
\frac{[W,q]^{k}}{\phi([W,q])^{k+1}}\sum_{\substack{\fd_1,\fe_1,\ldots,\fd_{k},\fe_{k}\in\fX_q}}1
\leq\frac{W^{k}}{\phi(W)^{k+1}}\cdot\frac{\tau(q_*)^{2k}q_*^{k}}{\phi(q_*)^{k+1}}\ll_\epsilon
\frac{W^{k}}{\phi(W)^{k+1}}\cdot\frac{1}{w^{1-\epsilon}}.
\end{align*}

Finally, if $q\mid W$, then evidently $\fX_q=\{(1,1,\ldots,1)\}$. So
\begin{align*}
&\sum_{\substack{d_1,\ldots,d_{k},e_1,\ldots,e_{k}\in S_W\\
d_1,e_1,\ldots,d_{k},e_{k}\in\cZ_q\\
[d_1,e_1],\ldots,[d_{k},e_{k}]\text{ coprime}}}
\Delta_{d_1,e_1,\ldots,d_{k},e_{k}}
\cdot\frac{\lambda_{1,d_1,\ldots,d_{k}}\lambda_{1,e_1,\ldots,e_{k}}}{\phi([W,d_1,e_1,\ldots,d_{k},e_{k},q])}\\
=&e\bigg(\frac{a(b+h_0)}{q}\bigg)\cdot\frac{1}{\phi(W)}
\sum_{\substack{d_1,\ldots,d_{k},e_1,\ldots,e_{k}\in S_W\\
[d_1,e_1],\ldots,[d_{k},e_{k}]\text{ are coprime}
}}\frac{\lambda_{1,d_1,\ldots,d_{k}}\lambda_{1,e_1,\ldots,e_{k}}}{\phi([d_1,e_1]\cdots[d_{k},e_{k}])}\\
=&e\bigg(\frac{a(b+h_0)}{q}\bigg)\cdot\frac{1+o_w(1)}{(\log R)^{{k}}}\cdot\frac{W^{{k}}}{\phi(W)^{{k+1}}}\int_{\R^{{k}}}\bigg(\frac{\partial^{{k}}F(0,t_1,\ldots,t_{k})}{\partial t_1\cdots\partial t_{k}}\bigg)^2d t_1\cdots d t_{k}.
\end{align*}
\qed
\begin{remark}
If we set $a=q=1$ and $\theta=0$ in (\ref{pinhiaqtheta}), we can get 
\begin{align}\label{pinhisum}
\sum_{\substack{n\sim N\\ 
n\equiv b\pmod{W}}}\varpi(n+h_i)\cdot\Omega_n
=(1+o_w(1))\cdot\frac{\cJ_iN}{(\log R)^{{k}}}\cdot\frac{W^{{k}}}{\phi(W)^{{k+1}}},
\end{align}
which is one of two key formulas used in the proof of the Maynard-Tao theorem.
And the other one is
\begin{align}\label{nhisum}
\sum_{\substack{n\sim N\\ 
n\equiv b\pmod{W}}}\Omega_n
=(1+o_w(1))\cJ_*\cdot\frac{N}{(\log R)^{{k+1}}}\cdot\frac{W^{{k}}}{\phi(W)^{{k+1}}},
\end{align}
where
$$
\cJ_*=\int_{\R^{{k}}}\bigg(\frac{\partial^{{k+1}}F(t_0,\ldots,t_{k})}{\partial t_0\cdots\partial t_{k}}\bigg)^2d t_0\cdots d t_{k}.
$$
\end{remark}

\section{Reducing to the Kronecker system}
\setcounter{lemma}{0}\setcounter{theorem}{0}\setcounter{proposition}{0}\setcounter{corollary}{0}\setcounter{remark}{0}
\setcounter{equation}{0}

Suppose that $(\cX,\sB_\cX,\mu,T)$ is a measure-preserving system and $T$ is invertible.
For any $f(x),g(x)\in L^2(\cX,\sB_\cX,\mu)$, let
$$
\langle f,g\rangle:=\int_\cX f\cdot\overline{g}d\mu 
$$
denote the inner product over $L^2(\cX,\sB_\cX,\mu)$. In particular, let $\|f\|_2:=\sqrt{\langle f,f\rangle}$ denote the $L^2$-norm of $f$.

Let $\sK\subseteq \sB_\cX$ be the smallest sub-$\sigma$-algebra with respect to which all eigenfunctions of $T$ are measurable.
For any $f(x)\in L_\nu^2$, let $\E(f|\sK)$ denote the conditional expectation of $f$ with respect to $\sK$
(cf. \cite[Theorem 5.1]{EW11}).
It is known that 
$$
\int_A fd\mu=\int_A \E(f|\sK)d\mu
$$
for any $A\in\sK$. And if $f$ is non-negative, then $\E(f|\sK)$  is also non-negative almost everywhere.
In fact, $\E(f|\sK)$ is the orthogonal projection of $f$ over the close Hilbert subspace $L^2(\cX,\sK,\mu)$.
Write $f_1=\E(f|\sK)$ and $f_2=f-\E(f|\sK)$. Then
$f_2$ is orthogonal to $L^2(\cX,\sK,\mu)$.
Since $T^{-1}(\sK)\subseteq\sK$, $T^nf_1\in L^2(\cX,\sK,\mu)$ for any $n\in\N$. Thus we have
$$
\langle f,T^nf\rangle=\langle f_1,T^nf_1\rangle+\langle f_2,T^nf_2\rangle.
$$

In this section, we shall focus on those $f(x)$ with $\E(f|\sK)=0$.
\begin{proposition}\label{fK0}
Suppose that $\E(f|\sK)=0$. Then for each $0\leq i\leq k$,
\begin{equation}
\sum_{\substack{n\sim N\\ 
n\equiv b\pmod{W}}}\varpi(n+h_i)\Omega_n\cdot\langle f,T^{n+h_i-1}f\rangle=o_w\bigg(\frac{N}{(\log R)^{k}}\cdot\frac{W^{k}}{\phi(W)^{k+1}}\bigg),
\end{equation}
whenever $N$ is sufficiently large with respect to $w$.
\end{proposition}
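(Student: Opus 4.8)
The plan is to exploit the spectral theorem: since $\E(f\mid\sK)=0$, the function $f$ is orthogonal to the Kronecker factor, so the complex measure $\sigma_f$ on $\T$ associated to the unitary $U_T$ (defined by $\langle f,T^nf\rangle=\widehat{\sigma_f}(n)=\int_\T e(n\beta)\,d\sigma_f(\beta)$) has no atoms; in particular $\sigma_f(\{\beta\})=0$ for every $\beta\in\T$. Writing $\langle f,T^{n+h_i-1}f\rangle=\int_\T e((n+h_i-1)\beta)\,d\sigma_f(\beta)$, one interchanges the finite sum over $n$ with the integral over $\T$, so the quantity to bound becomes
\begin{equation}\label{spectralrewrite}
\int_\T e((h_i-1)\beta)\Bigg(\sum_{\substack{n\sim N\\ n\equiv b\pmod W}}\varpi(n+h_i)\,\Omega_n\, e(n\beta)\Bigg)\,d\sigma_f(\beta).
\end{equation}
Thus everything reduces to controlling the inner exponential sum $S_N(\beta):=\sum_{n\sim N,\ n\equiv b\,(W)}\varpi(n+h_i)\Omega_n\,e(n\beta)$ uniformly in $\beta$, and then integrating against the non-atomic measure $\sigma_f$.

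The second step is a minor-arc/major-arc dichotomy applied through Proposition \ref{pinhiaqthetaT}, which is exactly tailored for this. Fix a Farey-type parameter, say $Q=N^{1/3-\epsilon}R^{-2}$, and Dirichlet-approximate each $\beta$ as $\beta=a/q+\theta$ with $(a,q)=1$, $q\le Q$, $|\theta|\le 1/(qQ)$. On the \emph{minor arcs}—where either $q$ is large, or $q\le w$-smooth-part fails, i.e. $q\nmid W$—estimate (\ref{pinhiaqtheta2}) gives $S_N(\beta)\ll_\epsilon w^{-(1-\epsilon)} N(\log R)^{-k}W^k\phi(W)^{-k-1}$, which already carries the saving $o_w(1)$ we want, uniformly in $\beta$; integrating against $|\sigma_f|$ (a finite measure) costs only the constant $\|\sigma_f\|$, so the minor-arc contribution to (\ref{spectralrewrite}) is acceptable. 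The one subtlety is to check that when $q\mid W$ but $|\theta|>N^{\epsilon-1}$ the sum $\sum_{n\sim N}e(n\theta)$ appearing in the main term (\ref{pinhiaqtheta}) is itself $O(|\theta|^{-1})=o(N)$, so that the ``major-arc'' formula still delivers a power-of-$w$-type or at least $o_w(N\cdots)$ bound there; this is a routine geometric-series estimate.

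The third and genuinely delicate step is the \emph{major arcs}: $q\mid W$ and $|\theta|\le N^{\epsilon-1}$. Here Proposition \ref{pinhiaqthetaT} gives an honest asymptotic, $S_N(\beta)\sim e(a(b+h_i)/q)\cdot c_W\cdot\cJ_i\cdot\sum_{n\sim N}e(n\theta)$ with $c_W=(1+o_w(1))(\log R)^{-k}W^k\phi(W)^{-k-1}$, so there is \emph{no} pointwise saving and we must instead use that these major arcs, while each contributing a quantity of the full expected size $\asymp c_W N$, together form a set whose $\sigma_f$-measure is small. Concretely, the union of major arcs is contained in $\bigcup_{q\mid W}\bigcup_{(a,q)=1}\{\beta:\|\beta-a/q\|\le N^{\epsilon-1}\}$, and summing $\sum_{n\sim N}e(n\theta)$ against $d\sigma_f(\beta)$ over this set is, up to $c_W$ and bounded factors, comparable to $\int_{\mathfrak M}\min(N,\|\beta-a/q\|^{-1})\,d|\sigma_f|(\beta)$. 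Because $\sigma_f$ has no atoms, $|\sigma_f|(\{\beta:\|\beta-a/q\|\le \eta\})\to 0$ as $\eta\to 0$ for each fixed $a/q$; and the number of fractions $a/q$ with $q\mid W$ is $\le W$, which grows, so one must let $w\to\infty$ slowly relative to $N$ (permissible since $w\le\log\log\log N$) and invoke a quantitative form of non-atomicity—e.g. uniform continuity of $\eta\mapsto\sup_{x\in\T}|\sigma_f|([x-\eta,x+\eta])$, valid for any finite non-atomic Borel measure on $\T$—to conclude that the total major-arc integral is $o_w(1)\cdot c_W N$. This ``non-atomic measure kills the major arcs'' estimate is the main obstacle: it replaces the usual Maynard--Tao main term by zero, and it is where the hypothesis $\E(f\mid\sK)=0$ is essential. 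Combining the minor-arc and major-arc bounds in (\ref{spectralrewrite}) yields the claimed $o_w\big(N(\log R)^{-k}W^k\phi(W)^{-k-1}\big)$, completing the proof.
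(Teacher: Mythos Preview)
Your overall architecture---spectral (Herglotz) representation, circle method on the exponential sum $S_N(\beta)$, and the non-atomicity of $\sigma_f$ to kill the major arcs---is exactly the paper's strategy. The major-arc portion of your argument (where $q\mid W$ and $|\theta|$ is small) is essentially correct and matches the paper, which formalizes your ``let $w\to\infty$ slowly relative to $N$'' via an auxiliary function $\Psi(N)$.

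There is, however, a genuine gap in your minor-arc treatment. You take the Dirichlet parameter $Q=N^{1/3-\epsilon}R^{-2}$ so that every approximation has $q\le Q$, fitting the $q$-hypothesis of Proposition~\ref{pinhiaqthetaT}. But Dirichlet then only guarantees $|\theta|\le 1/(qQ)$, which for small $q$ is of order $N^{-1/3+\epsilon}$, far larger than the $|\theta|\le N^{\epsilon-1}$ required by the proposition. Your attempted patch---that when $q\mid W$ and $|\theta|>N^{\epsilon-1}$ one simply observes $\sum_{n\sim N}e(n\theta)=O(|\theta|^{-1})$ in the main term of (\ref{pinhiaqtheta})---does not work: the asymptotic (\ref{pinhiaqtheta}) is \emph{only proved} under $|\theta|\le N^{\epsilon-1}$, because its error term comes from Lemma~\ref{BVexpsum}, whose range is $|\theta|\le K^{-3}(\log x)^{-B}\approx N^{-1}$. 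Outside that range the formula has no meaning. The same objection applies to your use of (\ref{pinhiaqtheta2}) when $q\nmid W$ but $|\theta|$ is large.

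The paper resolves this by choosing instead $P=N^{1/3-1/99}$ and $Q=N^{1-1/49}$, so that on the major arcs $\fM_N$ one has $q\le P$ and $|\theta|\le 1/(qQ)\le N^{-1+1/49}$, genuinely small enough for Proposition~\ref{pinhiaqthetaT}; the price is that the minor arcs $\fm_N$ now carry $P<q\le Q$, and these are handled not by Proposition~\ref{pinhiaqthetaT} at all but by the Balog--Perelli estimate (Lemma~\ref{minorL}) plus partial summation, giving a power saving $S_N(\beta)\ll N^{1-1/999}$ uniformly on $\fm_N$. This extra input is the missing ingredient in your sketch.
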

Clearly we only need to consider the case $i=0$.
Let $\T:=\R/\Z$ denote the $1$-dimensional torus. For convenience, let $|\cdot|_\T$ denote the norm over $\T$, i.e., for any $x\in\R$
$$
|x|_\T:=\min\{|x-n|:\,n\in\Z\}.
$$
Let $P=N^{\frac13-\frac1{99}}$ and $Q=N^{1-\frac1{49}}$.
Define
$$
\fM_N^{(a,q)}=\{\alpha\in\T:\,|\alpha-a/q|_\T\leq q^{-1}Q^{-1}\}.
$$
By a well-known result of Dirichlet, we have
$$
\T=\bigcup_{\substack{1\leq a\leq q\leq Q\\ (a,q)=1}}\fM_N^{(a,q)}.
$$
Let
$$
\fM_N=\bigcup_{\substack{1\leq a\leq q\leq P\\ (a,q)=1}}\fM_N^{(a,q)}
$$
and let $\fm_N=\T\setminus\fM_N$. 

By the well-known  Herglotz theorem (cf. \cite[Chapter 1.8]{He83}), there exists a non-negative measure $\upsilon$ on $\T$ such that
$$
\langle f,T^nf\rangle=\int_0^1 e(n\alpha)d\upsilon(\alpha).
$$
First, we consider the integrals on the minor arc $\fm_N$. The following lemma is due to Balog and Perelli \cite{BP85}.
\begin{lemma}\label{minorL}
Suppose that $(a,q)=(b,D)=1$.
Then letting $u_D=(D,q)$, we have
\begin{equation}
\sum_{\substack{n\leq x\\ n\equiv b\pmod{D}}}\varpi(n)e\bigg(n\cdot\frac aq\bigg)\ll
(\log x)^3\bigg(\frac{u_Dx}{Dq^{\frac 12}}+\frac{q^{\frac12}x^{\frac12}}{u_D^{\frac12}}+\frac{x^{\frac45}}{D^{\frac25}}\bigg).
\end{equation}
\end{lemma}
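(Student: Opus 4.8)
The plan is to reprove this estimate of Balog and Perelli by the classical circle-method argument for primes: reduce $\varpi$ to the von Mangoldt function $\Lambda$, decompose $\Lambda$ by Vaughan's identity, bound the resulting Type~I and Type~II sums, and keep careful track of how the common divisor $u_{D}=(D,q)$ propagates through the estimates. First one reduces to bounding $S:=\sum_{n\leq x,\ n\equiv b\pmod D}\Lambda(n)e(na/q)$, since the difference $\varpi-\Lambda$ is supported on proper prime powers and contributes $\ll x^{1/2}\log x$ to the progression; standard trivial estimates also let us assume $q\leq x$ and that $D$ is at most a small power of $x$, for otherwise $S\ll(x/D+1)\log x$ is already dominated by the second or third term of the lemma. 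Now apply Vaughan's identity with a parameter $y$ (a power of $x$, to be chosen at the end): up to the negligible contribution of $n\leq y$, this writes $S$ as a bounded number of Type~I sums
$$
S_{\mathrm I}=\sum_{d\leq y^{2}}c_{d}\sum_{\substack{m\leq x/d\\ dm\equiv b\pmod D}}(\log m\text{ or }1)\,e(dma/q),\qquad |c_{d}|\ll\tau(d)\log x,
$$
and Type~II sums
$$
S_{\mathrm{II}}=\sum_{y<d\leq x/y}\sum_{\substack{m\leq x/d\\ dm\equiv b\pmod D}}\alpha_{d}\beta_{m}\,e(dma/q),\qquad |\alpha_{d}|,|\beta_{m}|\ll\tau(\cdot)\log x.
$$
In each sum, $dm\equiv b\pmod D$ together with $(b,D)=1$ forces $(d,D)=(m,D)=1$, so for fixed $d$ the variable $m$ runs over a single residue class modulo $D$.

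For the Type~I sums, the inner sum over $m$ is a geometric progression of common difference $D$, hence of ratio $e(dDa/q)=e(d\gamma)$ with $\gamma=Da/q$; since $(a,q)=1$, the reduced denominator of $\gamma$ is $q/u_{D}$. Thus, after removing $\log m$ by partial summation, the inner sum is $\ll\min(x/(dD),|d\gamma|_{\T}^{-1})$, and the standard estimate for $\sum_{d\leq y^{2}}\min(A/d,|d\gamma|_{\T}^{-1})$ (obtained by splitting the $d$-range into blocks of length $q/u_{D}$) gives
$$
S_{\mathrm I}\ll(\log x)^{3}\Bigl(\frac{u_{D}x}{Dq}+y^{2}+\frac{q}{u_{D}}\Bigr).
$$
Here $u_{D}x/(Dq)\leq u_{D}x/(Dq^{1/2})$ and $q/u_{D}\leq(xq/u_{D})^{1/2}$ under $q\leq x$, so the only genuinely new Type~I contribution is the Vaughan-parameter term $y^{2}$.

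For the Type~II sums, Cauchy--Schwarz in $d$ over a dyadic block $d\sim M\in[y,x/y]$ reduces matters to $\sum_{d\sim M}|\sum_{m}\beta_{m}\1_{dm\equiv b\pmod D}e(dma/q)|^{2}$; opening the square and writing $m_{1}-m_{2}=Dk$ (forced since $m_{1}\equiv m_{2}\pmod D$), the sum over $d$ becomes a geometric progression over a residue class modulo $D$, of ratio $e(D^{2}ka/q)$, whose reduced denominator is $q/(D^{2},q)$. The key point is that $u_{D}=(D,q)$ divides $(D^{2},q)$, so this denominator is $\leq q/u_{D}$; applying the standard $\min$-estimate to the $k$-sum over $|k|\leq x/(MD)$, summing trivially over $m_{1}$, and taking square roots yields
$$
S_{\mathrm{II}}\ll(\log x)^{3}\Bigl(\frac{u_{D}x}{Dq^{1/2}}+\frac{q^{1/2}x^{1/2}}{u_{D}^{1/2}}+\frac{x}{(MD)^{1/2}}+\frac{(xM)^{1/2}}{D^{1/2}}\Bigr),
$$
and over $M\in[y,x/y]$ the last two terms are at worst $x/(yD)^{1/2}$. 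Adding the Type~I bound and choosing $y=x^{2/5}D^{-1/5}$ to balance $y^{2}$ against $x/(yD)^{1/2}$ makes both equal to $x^{4/5}/D^{2/5}$, and collecting everything produces precisely the three terms of the lemma.

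The step I expect to be the main obstacle is the divisor bookkeeping rather than any single estimate: one must check that the reduced denominators $q/(D,q)$ in Type~I and $q/(D^{2},q)$ in Type~II produce exactly the factors $u_{D}$ and $u_{D}^{-1/2}$ claimed, with no loss of powers of $u_{D}$, and that the trivial and boundary contributions (large $q$, large $D$, the short sum over $n\leq y$) are genuinely dominated by the three stated terms. It is also worth noting that one must apply Vaughan's identity directly to $S$ and exploit the factor $D^{-1}$ coming from the geometric series over the progression; expanding $\1_{n\equiv b\pmod D}$ into additive characters modulo $D$ and invoking the classical Vinogradov bound term by term would only give the cruder error $x^{4/5}$ in place of $x^{4/5}D^{-2/5}$.
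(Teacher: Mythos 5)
The paper does not actually prove this lemma: it is quoted directly from Balog and Perelli \cite{BP85}, so there is no internal argument for your proposal to diverge from. What you have written is essentially a reconstruction of the standard proof of the cited result: reduce $\varpi$ to $\Lambda$, apply Vaughan's identity, and track how $u_D=(D,q)$ enters the Type I and Type II estimates through the reduced denominators $q/(D,q)$ and $q/(D^2,q)$. Your outline is correct, and the balancing $y=x^{2/5}D^{-1/5}$ does produce the term $x^{4/5}/D^{2/5}$; the boundary reductions (large $q$, large $D$, prime powers, $n\le y$) are indeed absorbed by the stated terms since $q^{1/2}x^{1/2}/u_D^{1/2}\ge x^{1/2}$. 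Two points deserve attention if you write this out in full. First, your stated ``key point'' ($u_D\mid(D^2,q)$, hence $q/(D^2,q)\le q/u_D$) only controls the contribution $x q_2$ giving the second term; the first term $u_Dx/(Dq^{1/2})$ comes from the piece $x^2/(D^2q_2)$ with $q_2=q/(D^2,q)$, and there you need the complementary inequality $(D^2,q)\le(D,q)^2=u_D^2$, i.e.\ $q_2\ge q/u_D^2$ — trivial, but it is the opposite direction from the one you quoted, so the divisor bookkeeping you flagged as the main obstacle is exactly here. Second, your displays assert the factor $(\log x)^3$, but a routine execution of Vaughan's identity with divisor-function coefficients and dyadic summation tends to give a slightly larger power of $\log x$ unless the logarithms are counted carefully (as Balog and Perelli do); this is harmless for the paper, since the lemma is only used on the minor arcs to obtain a bound of the shape $N^{1-1/999}$, where any fixed power of $\log$ is acceptable, but if you claim the exponent $3$ you should either do that bookkeeping or simply cite \cite{BP85} as the paper does.
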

Applying Lemma \ref{minorL} and the partial summation, we can get
\begin{align*}
\sum_{\substack{n\sim N\\ n\equiv b\pmod{D}}}\varpi(n)e\bigg(n\bigg(\frac aq+\theta\bigg)\bigg)\ll(1+\theta N)(\log N)^3\cdot\bigg(\frac{u_DN}{Dq^{\frac 12}}+\frac{q^{\frac12}N^{\frac12}}{u_D^{\frac12}}+\frac{N^{\frac45}}{D^{\frac25}}\bigg),
\end{align*}
where $u_D=(D,q)$.
It follows that
\begin{align*}
&\sum_{\substack{n\sim N\\ 
n\equiv b\pmod{W}}}\varpi(n+h_0)e\bigg((n+h_0)\bigg(\frac aq+\theta\bigg)\bigg)\cdot\Omega_n\\
\ll&\sum_{\substack{d_1,\ldots,d_{k},e_1,\ldots,e_{k}\in S_W\\
d_1\cdots d_{k},e_1\cdots e_{k}\leq R
\\
[d_1,e_1],\ldots,[d_{k},e_{k}]\text{ coprime}}}\bigg|\sum_{\substack{n\sim N\\ 
n\equiv b+h_0\pmod{W}\\
n\equiv h_0-h_i\pmod{[d_i,e_i]}}}\varpi(n)e\bigg(n\bigg(\frac aq+\theta\bigg)\bigg)\bigg|\\
\ll&(1+\theta N)(\log N)^3\sum_{D\leq WR^2}\tau(D)^{2k}\cdot \bigg(\frac{u_DN}{Dq^{\frac 12}}+\frac{q^{\frac12}N^{\frac12}}{u_D^{\frac12}}+\frac{N^{\frac45}}{D^{\frac25}}\bigg).
\end{align*}
Note that
\begin{align*}
\sum_{D\leq WR^2}\tau(D)^{2k}\cdot\frac{u_D}{D}\leq&
\sum_{u\mid q}\sum_{\substack{D\leq WR^2\\ u\mid D}}\frac{\tau(D)^{2k}}{D/u}
\leq\sum_{u\mid q}\tau(u)^{2k}\sum_{\substack{v\leq WR^2/u}}\frac{\tau(v)^{2k}}{v}\\
\ll&
(\log N)^{4^k}\sum_{u\mid q}\tau(u)^{2k}\ll
(\log N)^{4^k}\cdot\tau(q)^{2k+1}.
\end{align*}
Hence for any $\epsilon>0$,
\begin{align}\label{minor}
&\sum_{\substack{n\sim N\\ 
n\equiv b\pmod{W}}}\varpi(n+h_0)e\bigg((n+h_0)\bigg(\frac aq+\theta\bigg)\bigg)\cdot\Omega_n\notag\\
\ll_\epsilon&\frac{N^{1+\epsilon}}{q^{\frac 12-\epsilon}}+\frac{\theta N^{2+\epsilon}}{q^{\frac 12-\epsilon}}+q^{\frac12} N^{\frac12+\epsilon}R^2+\theta q^{\frac12} N^{\frac32+\epsilon}R^2+N^{\frac45+\epsilon} R^{\frac65}+\theta N^{\frac95+\epsilon} R^{\frac65}.
\end{align}

Suppose that $\alpha\in\fm_N$. Then $\alpha=a/q+\theta$ where $P<q\leq Q$, $(a,q)=1$ and $|\theta|\leq q^{-1}Q^{-1}$.
By (\ref{minor}),
\begin{align}\label{minorN}
&\sum_{\substack{n\sim N\\ 
n\equiv b\pmod{W}}}\varpi(n+h_0)e\big((n+h_0)\alpha\big)\cdot\Omega_n\notag\\
\ll_\epsilon&\frac{N^{1+\frac1{99}}}{q^{\frac 12-\frac1{99}}}+\frac{N^{2+\frac1{99}}}{q^{\frac 32-\frac1{99}}Q}+q^{\frac12} N^{\frac12+\frac1{99}}R^2+\frac{N^{\frac32+\frac1{99}}R^2}{q^{\frac12}Q}+N^{\frac45+\frac1{99}} R^{\frac65}+\frac{N^{\frac95+\frac1{99}} R^{\frac65}}{qQ}\notag\\
\leq&\frac{N^{1+\frac1{99}}}{P^{\frac 12-\frac1{99}}}+\frac{N^{2+\frac1{99}}}{P^{\frac 32-\frac1{99}}Q}+Q^{\frac12} N^{\frac12+\frac1{99}}R^2+\frac{N^{\frac32+\frac1{99}}R^2}{P^{\frac12}Q}+N^{\frac45+\frac1{99}} R^{\frac65}+\frac{N^{\frac95+\frac1{99}} R^{\frac65}}{PQ}\notag\\
\ll&N^{1-\frac1{999}}.
\end{align}
Hence
$$
\sum_{\substack{n\sim N\\ 
n\equiv b\pmod{W}}}\varpi(n+h_0)\Omega_n\int_{\fm_N} e\big((n+h_0-1)\alpha\big)d\upsilon(\alpha)\ll N^{1-\frac1{999}}.
$$

Next, let us turn to the integrals on $\fM_N$.
Since $\E(f|\sK)=0$, we must have $f(x)$ is orthogonal to any 
eigenfunction of $T$.
It follows that $\upsilon$ is non-atomic, i.e., for any $\theta\in\T$,
$$
\lim_{\epsilon\to 0}\int_{\theta-\epsilon}^{\theta+\epsilon}1d\upsilon=0.
$$
For any $k\geq 1$, Let $X_k$ be the least positive integer such that if $N\geq X_k$, then
$$
\sum_{\substack{1\leq a\leq q\leq k\\ (a,q)=1}}\int_{\fM_N^{(a,q)}}1d\upsilon\leq\frac1k.
$$
Define
$\Psi(x)=k$ if $X_k\leq x<X_{k+1}$. Then $\Psi(x)$ tends to infinity as $x\to+\infty$, and
$$
\lim_{N\to+\infty}\sum_{\substack{1\leq a\leq q\leq \Psi(N)\\ (a,q)=1}}\int_{\fM_N^{(a,q)}}1d\upsilon
\leq\lim_{N\to+\infty}\frac1{\Psi(N)} =0.
$$

Assume that $N$ is sufficiently large such that
$$w\leq\frac13\log \Psi(N).$$ 
Then
$$
W=W_0\prod_{p\leq w}p\leq\Psi(N).
$$
Let
$$
\fM_N^*=\bigcup_{\substack{1\leq a\leq q\leq \Psi(N)\\ (a,q)=1}}\fM_N^{(a,q)}
$$
Then by (\ref{pinhiaqtheta}),
\begin{align*}
&\sum_{\substack{n\sim N\\ 
n\equiv b\pmod{W}}}\varpi(n+h_0)\Omega_n\int_{\fM_N^*}e\big((n+h_0-1)\alpha\big)d\upsilon(\alpha)
\\=&\sum_{\substack{1\leq a\leq q\leq \Psi(N)\\ (a,q)=1}}\int_{\fM_N^{(a,q)}}\bigg(\sum_{\substack{n\sim N\\ 
n\equiv b\pmod{W}}}\varpi(n+h_0)\Omega_n\cdot e\big((n+h_0-1)\alpha\big)\bigg)d\upsilon(\alpha)\\
\ll&\frac{N}{(\log R)^{k}}\cdot\frac{W^{k}}{\phi(W)^{k+1}}\sum_{\substack{1\leq a\leq q\leq \Psi(N)\\ (a,q)=1}}\int_{\fM_N^{(a,q)}}1d\upsilon=o\bigg(\frac{N}{(\log R)^{k}}\cdot\frac{W^{k}}{\phi(W)^{k+1}}\bigg).
\end{align*}
On the other hand, if $\fM_N^{(a,q)}\subseteq\fM_N\setminus\fM_N^*$, then we must have $q>\Psi(N)\geq W$, i.e., $q\nmid W$.
It follows from (\ref{pinhiaqtheta2}) that
\begin{align*}
&\sum_{\substack{n\sim N\\ 
n\equiv b\pmod{W}}}\varpi(n+h_0)\Omega_n\int_{\fM_N\setminus\fM_N^*}e\big((n+h_0-1)\alpha\big)d\upsilon(\alpha)
\\=&\sum_{\substack{\Psi(N)<q\leq P\\ 1\leq a\leq q,\ (a,q)=1}}\int_{\fM_N^{(a,q)}}\sum_{\substack{n\sim N\\ 
n\equiv b\pmod{W}}}\varpi(n+h_0)\Omega_n\cdot e\big((n+h_0-1)\alpha\big)d\upsilon(\alpha)\\
\ll&\frac{1}{\sqrt{w}}\cdot\frac{N}{(\log R)^{k}}\cdot\frac{W^{k}}{\phi(W)^{k+1}}\cdot\sum_{\substack{\Psi(N)<q\leq P\\ 1\leq a\leq q,\ (a,q)=1}}\int_{\fM_N^{(a,q)}}1d\upsilon\\
=&o_w\bigg(\frac{N}{(\log R)^{k}}\cdot\frac{W^{k}}{\phi(W)^{k+1}}\bigg).
\end{align*}
Thus
\begin{align*}
&\sum_{\substack{n\sim N\\ 
n\equiv b\pmod{W}}}\varpi(n+h_0)\Omega_n\cdot\langle f,T^{n+h_0-1}f\rangle\\
=&
\sum_{\substack{n\sim N\\ 
n\equiv b\pmod{W}}}\varpi(n+h_0)\Omega_n\int_{0}^1e\big((n+h_0-1)\alpha\big)d\upsilon(\alpha)\\
=&o_w\bigg(\frac{N}{(\log R)^{k}}\cdot\frac{W^{k}}{\phi(W)^{k+1}}\bigg).
\end{align*}
\qed

\section{Ergodic rotation on a torus}
\setcounter{lemma}{0}\setcounter{theorem}{0}\setcounter{proposition}{0}\setcounter{corollary}{0}\setcounter{remark}{0}
\setcounter{equation}{0}

In this section, we assume that $f(x)\in L^2(\cX,\sK,\mu)$. 
Let
$$
\|f\|_1:=\int_\cX|f|d\mu
$$
denote the $L^1$-norm of $f$.
\begin{lemma}
There exist a measure-preserving system $(\cG,\sB_\cG,\nu,S_\alpha)$ and  a map $\psi$ from $\cX$ to $\cG$ such that
\medskip

(1) $\cG$ is a compact abelian group and $\nu$ is the Haar measure on $\cG$; \medskip

(2) $S_\alpha:\,x\mapsto x+\alpha$ is an ergodic rotation on $\cG$, where $\alpha\in\cG$;\medskip

(3) $\psi\circ T=S_\alpha\circ\psi$ and $\nu=\mu\circ\psi^{-1}$;\medskip

(4) $\sK=\psi^{-1}(\sB_\cG)\pmod{\mu}$;\medskip
\end{lemma}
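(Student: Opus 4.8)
\emph{Proof idea.} The plan is to reproduce the classical Halmos--von Neumann argument that the Kronecker factor of an ergodic system is a compact group rotation, taking care over the measurability of $\psi$ and over null sets. We assume $T$ is ergodic (Theorem~\ref{mainT} is reduced to the ergodic case in Section~5 via the ergodic decomposition theorem) and, harmlessly, that $L^2(\cX,\sB_\cX,\mu)$ is separable (reduce to a Lebesgue space in the usual way). Put
$$
\Sigma=\bigl\{\lambda\in S^1:\,\text{there is }\varphi\in L^2(\cX,\sB_\cX,\mu)\setminus\{0\}\text{ with }\varphi\circ T=\lambda\varphi\bigr\},
$$
where $S^1=\{z\in\C:\,|z|=1\}$. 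Since products and complex conjugates of eigenfunctions are eigenfunctions, $\Sigma$ is a subgroup of $S^1$, and it is countable because eigenfunctions with distinct eigenvalues are orthogonal. By ergodicity every eigenvalue is simple and every eigenfunction has constant modulus, so for each $\lambda\in\Sigma$ we may fix a unit-modulus eigenfunction $\varphi_\lambda$, $\varphi_\lambda\circ T=\lambda\varphi_\lambda$. The one genuinely delicate point is to choose this family \emph{homomorphically}, i.e. $\varphi_{\lambda_1\lambda_2}=\varphi_{\lambda_1}\varphi_{\lambda_2}$ for all $\lambda_1,\lambda_2$: the unit-modulus eigenfunctions form a group under pointwise multiplication fitting into a short exact sequence $1\to S^1\to E\to\Sigma\to 1$ of abelian groups (the subgroup $S^1$ being the constants), which splits because $S^1\cong\R/\Z$ is divisible, hence injective in the category of abelian groups. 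Fix such a splitting; in particular $\varphi_1\equiv 1$.

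Now give $\Sigma$ the discrete topology and let $\cG:=\widehat{\Sigma}$ be its Pontryagin dual: a compact abelian group, metrizable since $\Sigma$ is countable, with normalized Haar measure $\nu$. The inclusion $\Sigma\hookrightarrow S^1$ is a homomorphism, hence an element $\alpha\in\widehat{\Sigma}=\cG$, and we set $S_\alpha(g)=\alpha g$ (equivalently $g\mapsto g+\alpha$ in the additive notation of Section~4); this gives (1). For (2), by Pontryagin duality the characters of $\cG$ are the evaluation maps $\mathrm{ev}_\lambda\colon g\mapsto g(\lambda)$, $\lambda\in\Sigma$, and they form an orthonormal basis of $L^2(\cG,\nu)$; since $\mathrm{ev}_\lambda(S_\alpha g)=g(\lambda)\,\alpha(\lambda)=\lambda\,\mathrm{ev}_\lambda(g)$, each $\mathrm{ev}_\lambda$ is an eigenfunction of $S_\alpha$ with eigenvalue $\lambda$, and since $\lambda\mapsto\lambda$ is injective on $\Sigma$ the eigenvalue $1$ of $S_\alpha$ is simple. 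Hence $S_\alpha$ is ergodic (equivalently, $\alpha$ topologically generates $\cG$), and in fact uniquely ergodic.

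Define $\psi\colon\cX\to\cG$ by $\psi(x)(\lambda)=\varphi_\lambda(x)$ for $\lambda\in\Sigma$. On the intersection of the countably many full-measure sets on which $|\varphi_\lambda|=1$ and on which the relations $\varphi_{\lambda_1\lambda_2}=\varphi_{\lambda_1}\varphi_{\lambda_2}$ hold, $\psi(x)$ is a homomorphism $\Sigma\to S^1$, i.e. $\psi(x)\in\cG$, and $\psi$ is measurable because each $\varphi_\lambda$ is. Equivariance is immediate:
$$
\psi(Tx)(\lambda)=\varphi_\lambda(Tx)=\lambda\,\varphi_\lambda(x)=\alpha(\lambda)\,\psi(x)(\lambda)=\bigl(S_\alpha\psi(x)\bigr)(\lambda),
$$
so $\psi\circ T=S_\alpha\circ\psi$. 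For $\nu=\mu\circ\psi^{-1}$ it suffices to match Fourier coefficients: for every $\lambda\in\Sigma$,
$$
\int_\cG\mathrm{ev}_\lambda\,d(\mu\circ\psi^{-1})=\int_\cX\varphi_\lambda\,d\mu=\lambda\int_\cX\varphi_\lambda\,d\mu,
$$
using $T$-invariance of $\mu$ and $\varphi_\lambda\circ T=\lambda\varphi_\lambda$; hence this integral is $0$ for $\lambda\neq1$ and $1$ for $\lambda=1$, which are exactly the Fourier coefficients of $\nu$, so $\mu\circ\psi^{-1}=\nu$ (alternatively invoke the unique ergodicity of $S_\alpha$ together with $S_\alpha$-invariance of $\mu\circ\psi^{-1}$). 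This gives (3).

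Finally, (4). Since $\cG$ is compact metrizable and the characters $\mathrm{ev}_\lambda$ separate points, $\sB_\cG$ is the $\sigma$-algebra they generate; as $\mathrm{ev}_\lambda\circ\psi=\varphi_\lambda$ is an eigenfunction of $T$, hence $\sK$-measurable, we get $\psi^{-1}(\sB_\cG)\subseteq\sK$ mod $\mu$. Conversely $\sK$ is generated by the eigenfunctions of $T$; if $\varphi\circ T=\lambda\varphi$ with $\varphi\not\equiv0$, then $\varphi/\varphi_\lambda$ is $T$-invariant, hence a.e. constant by ergodicity, so $\varphi$ is a scalar multiple of $\varphi_\lambda=\mathrm{ev}_\lambda\circ\psi$ and is $\psi^{-1}(\sB_\cG)$-measurable; thus $\sK\subseteq\psi^{-1}(\sB_\cG)$ mod $\mu$, and the two sub-$\sigma$-algebras coincide mod $\mu$. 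The crux of the whole argument is the homomorphic selection of the $\varphi_\lambda$ in the first step; everything else is Pontryagin duality together with the elementary spectral theory of ergodic transformations.
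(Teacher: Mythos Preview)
Your argument is correct: it is the classical Halmos--von Neumann construction of the Kronecker factor, carried out carefully (the splitting of $1\to S^1\to E\to\Sigma\to 1$ via divisibility of $S^1$ is exactly the right device for the homomorphic choice of eigenfunctions, and the verifications of (1)--(4) are all sound). The paper itself gives no argument here at all---its proof is a one-line citation, ``See \cite[Theorem 6.10]{EW11}''---so you have effectively supplied the proof that sits behind that reference. Your explicit assumption that $T$ is ergodic is appropriate: Section~4 uses the lemma only under that hypothesis, and the non-ergodic case is handled separately in Section~5 via ergodic decomposition, as you note.
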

\begin{proof}
See \cite[Theorem 6.10]{EW11}.
\end{proof}
Here $(\cG,\sB_\cG,\nu,S_\alpha)$ is also called the {\it Kronecker factor } of $(\cX,\sB_\cX,\mu,T)$.
Since $\cG$ is a compact abelian group, $\cG$ is isomorphic to the direct sum of a finite abelian group and a torus.
So we may assume that $\cG=G\oplus\T^d$ and $\nu=\upsilon_G\times\fm_{\T^d}$, where $\upsilon_G$ is the discrete probability measure on $G$
and $\fm_{\T^d}$ is the Lebesgue measure on $\T^d$.
Below we may assume that the integer $W_0$ in (\ref{Ww}) is divisible by the cardinality of $G$.

\begin{proposition}\label{fK}
Suppose that $f(x)\in L^2(\cX,\sK,\mu)$ is a non-negative function with $0<\|f\|_2\leq 1$ and $0<\epsilon<\|f\|_1^2$. Then for each $0\leq i\leq k$
\begin{equation}
\sum_{\substack{n\sim N\\ 
n\equiv b\pmod{W}}}\varpi(n+h_i)\Omega_n\cdot\langle f,T^{n+h_i-1}f\rangle
\geq(\|f\|_1^2-\epsilon)\cdot\frac{\cJ_iN}{(\log R)^{{k}}}\cdot\frac{W^{{k}}}{\phi(W)^{{k+1}}},
\end{equation}
whenever $N$ is sufficiently large with respect to $w$.
\end{proposition}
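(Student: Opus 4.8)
The plan is to push the whole sum down to the Kronecker factor, expand in Fourier series, and feed the Fourier modes into the estimates of Sections~2 and~3. Since $f\geq 0$ we have $\langle f,T^mf\rangle=\int_\cX f\cdot(f\circ T^m)\,d\mu\geq 0$ for every $m$, so the left-hand side is automatically nonnegative and we may assume $\cJ_i>0$ (otherwise the right-hand side is nonpositive). Using $\sK=\psi^{-1}(\sB_\cG)\pmod{\mu}$, write $f=g\circ\psi$ with $g\in L^2(\cG,\sB_\cG,\nu)$, $g\geq 0$, $\|g\|_2=\|f\|_2$ and $\|g\|_1=\|f\|_1$, so $\langle f,T^{n+h_i-1}f\rangle=\langle g,S_\alpha^{\,n+h_i-1}g\rangle$. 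In the realization $\cG=G\oplus\T^d$, $\nu=\upsilon_G\times\fm_{\T^d}$, $\alpha=(\alpha_G,\beta)$, I would first take $G$ to be the Pontryagin dual of the torsion subgroup of $\hat\cG$ (enlarging $W_0$ so that $|G|\mid W_0$); then $1,\beta_1,\dots,\beta_d$ are linearly independent over $\Q$, so $\langle\mathbf m,\beta\rangle\notin\Q$ for every $\mathbf m\in\Z^d\setminus\{0\}$. This normalization is essential: a rational value $\langle\mathbf m,\beta\rangle=a/q$ with $q\mid W$ would, by (\ref{pinhiaqtheta}), produce a contribution of the full size $\frac{\cJ_iN}{(\log R)^{k}}\frac{W^{k}}{\phi(W)^{k+1}}$ carrying the uncontrolled phase $e(a(b+h_i)/q)$, which would ruin the lower bound. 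Now since $|G|\mid W_0\mid h_i$ and $b\equiv1\pmod{W_0}$, for $n\equiv b\pmod{W}$ we have $|G|\mid n+h_i-1$, so every character of $G$ is trivial on $(n+h_i-1)\alpha_G$; expanding $g$ in the characters $\chi_G\otimes e(\langle\mathbf m,\cdot\rangle)$ gives
\[
\langle f,T^{n+h_i-1}f\rangle=\sum_{\mathbf m\in\Z^d}a_{\mathbf m}\,e\!\big((n+h_i-1)\langle\mathbf m,\beta\rangle\big),\qquad a_{\mathbf m}:=\sum_{\chi_G}\big|\hat g(\chi_G,\mathbf m)\big|^2\geq 0,
\]
with $a_{\mathbf m}=a_{-\mathbf m}$, $\sum_{\mathbf m}a_{\mathbf m}=\|f\|_2^2\leq 1$ and $a_0\geq\big|\int_\cG g\,d\nu\big|^2=\|f\|_1^2$.

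Next I truncate. Fix a small $\delta=\delta(\epsilon)>0$ and a finite symmetric set $S\subseteq\Z^d$ with $0\in S$ and $\sum_{\mathbf m\notin S}a_{\mathbf m}<\delta^2$; let $g_1$ be the corresponding partial Fourier sum (over all $\chi_G$ and all $\mathbf m\in S$) and $g_2=g-g_1$, so $\|g_2\|_2<\delta$. Then $|\langle f,T^mf\rangle-\langle g_1,S_\alpha^m g_1\rangle|\leq 3\delta$ uniformly in $m$ by Cauchy--Schwarz, and $\langle g_1,S_\alpha^m g_1\rangle=\sum_{\mathbf m\in S}a_{\mathbf m}e(m\langle\mathbf m,\beta\rangle)$ whenever $|G|\mid m$ (using $a_{-\mathbf m}=a_{\mathbf m}$). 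Multiplying by $\varpi(n+h_i)\Omega_n$, summing over $n\sim N$, $n\equiv b\pmod{W}$, and using (\ref{pinhisum}) to control the $O(\delta)$-error, the problem reduces to a lower bound for
\[
\sum_{\mathbf m\in S}a_{\mathbf m}\,\Sigma_{\mathbf m}+O\!\Big(\delta\,\frac{\cJ_iN}{(\log R)^{k}}\frac{W^{k}}{\phi(W)^{k+1}}\Big),\qquad \Sigma_{\mathbf m}:=\sum_{\substack{n\sim N\\ n\equiv b\pmod{W}}}\varpi(n+h_i)\Omega_n\,e\!\big((n+h_i-1)\langle\mathbf m,\beta\rangle\big).
\]

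The heart of the argument is the estimation of $\Sigma_{\mathbf m}$. For $\mathbf m=0$ one has $\Sigma_0=\sum_n\varpi(n+h_i)\Omega_n=(1+o_w(1))\frac{\cJ_iN}{(\log R)^{k}}\frac{W^{k}}{\phi(W)^{k+1}}$ by (\ref{pinhisum}), with $a_0\geq\|f\|_1^2$. For each of the finitely many $\mathbf m\in S\setminus\{0\}$ the number $\gamma:=\langle\mathbf m,\beta\rangle$ is irrational, and I claim
\[
\Sigma_{\mathbf m}\ \ll\ \frac1{\sqrt w}\cdot\frac{N}{(\log R)^{k}}\frac{W^{k}}{\phi(W)^{k+1}}+N^{1-\frac1{99}}.
\]
To see this, apply Dirichlet's theorem: $\gamma=a/q+\theta$ with $(a,q)=1$, $1\leq q\leq Q:=N^{1-\frac1{49}}$, $|\theta|\leq(qQ)^{-1}$. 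Since $\gamma$ is irrational and $W$ is fixed, the denominator $q=q(N)$ tends to $\infty$ with $N$, so $q>W$ — in particular $q\nmid W$ — once $N$ is large. If $q\leq N^{\frac13-\frac1{48}}R^{-2}$, then $|\theta|\leq Q^{-1}\leq N^{\frac1{48}-1}$, so Proposition~\ref{pinhiaqthetaT} in the form (\ref{pinhiaqtheta2}) (with its $\epsilon$ set to $1/48$) gives the first term. If $q>N^{\frac13-\frac1{48}}R^{-2}$, then $q>N^{1/4}$, and a routine check of the six summands of (\ref{minor}) with this $q$ and $|\theta|\leq(qQ)^{-1}$ gives $\Sigma_{\mathbf m}\ll N^{1-\frac1{99}}$.

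Finally I assemble. First choose $\delta$ small enough that the $O(\delta)$-term above is at most $\epsilon/3$ times $\frac{\cJ_iN}{(\log R)^{k}}\frac{W^{k}}{\phi(W)^{k+1}}$; this fixes $S$. Then take $w$ large enough that $a_0\Sigma_0\geq(\|f\|_1^2-\epsilon/3)\frac{\cJ_iN}{(\log R)^{k}}\frac{W^{k}}{\phi(W)^{k+1}}$ (using $a_0\geq\|f\|_1^2$, $\|f\|_1^2\leq1$ and $\cJ_i>0$) and that the $1/\sqrt w$-part of $\sum_{\mathbf m\in S\setminus\{0\}}a_{\mathbf m}|\Sigma_{\mathbf m}|$ is at most $\epsilon/6$ times the main term; finally take $N$ large relative to $w$ — and to the finitely many positive quantities $\min_{1\leq q\leq W}\|q\langle\mathbf m,\beta\rangle\|$, $\mathbf m\in S$, which is where the threshold for $N$ genuinely depends on $f$ and $\epsilon$ — so that $|S|\,N^{1-\frac1{99}}$ is at most $\epsilon/6$ times the main term (legitimate since that term is $\gg N(\log N)^{-k}$ for fixed $w$). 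Adding the three contributions yields the claimed inequality. I expect the main obstacle to be the two-case analysis of $\Sigma_{\mathbf m}$ for irrational $\gamma$: one has to check that (\ref{minor}) still gives a genuine power saving throughout the range $q>N^{\frac13-\frac1{48}}R^{-2}$ and that the complementary small-$q$ range is handled by the ``$q\nmid W$'' estimate (\ref{pinhiaqtheta2}) — both working precisely because $W$ is fixed, so that $q(N)$ eventually exceeds $W$. A subtler conceptual point is the reduction in the first step to a totally irrational rotation on the torus factor, without which a rational $\langle\mathbf m,\beta\rangle$ whose denominator divides $W$ would contribute at full size with an uncontrolled phase.
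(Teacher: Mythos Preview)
Your argument is correct and proceeds along a genuinely different line from the paper. The paper approximates $f$ in $L^2$ by a nonnegative step function supported on small cubes $\fC_{\bfa,\eta_0}\subseteq\T^d$, then proves a geometric lower bound (Lemma~4.2) for the weighted sum of $\fm_{\T^d}(\fC_{\bfa,\eta_0}\cap S_\kappa^{-m}\fC_{\bfb,\eta_0})$ by detecting the intersection via a Vinogradov smooth cutoff $\psi$ and expanding $\psi$ in Fourier series; only at that stage do the finitely many irrational frequencies $\sum_i j_i\kappa_i$ appear and get estimated by (\ref{pinhiaqtheta2}) and (\ref{minor}). You bypass the cube approximation and the Vinogradov cutoff entirely by expanding $\langle g,S_\alpha^m g\rangle$ directly in characters of $G\oplus\T^d$; the crucial observation that $a_{\mathbf m}=\sum_{\chi_G}|\hat g(\chi_G,\mathbf m)|^2\geq 0$ and $a_0\geq\|f\|_1^2$ immediately isolates the main term, and then truncation plus the same two exponential-sum inputs finish the job. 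In effect you are running the Herglotz argument of Section~3 in the discrete-spectrum setting, which is cleaner and shorter. Two small remarks: the power-saving exponent in your large-$q$ case should be weakened a bit (the term $q^{1/2}N^{1/2+\epsilon}R^2$ in (\ref{minor}) only yields about $N^{0.992}$, so $N^{1-1/999}$ as in (\ref{minorN}) rather than $N^{1-1/99}$), and your normalization step is already built into the paper's setup --- ergodicity of $S_\alpha$ on $G\oplus\T^d$ forces $\langle\mathbf m,\kappa\rangle\notin\Q$ for every nonzero $\mathbf m$, since otherwise $e(q\langle\mathbf m,\cdot\rangle)$ would be a nontrivial invariant function for suitable $q$.
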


It suffices to prove Proposition \ref{fK} when $i=0$.
Let $\epsilon_1=\epsilon/12$.
Choose disjoint $A_1,A_2,\ldots,A_s\in\sK$ and $a_1,\ldots,a_s>0$ such that
$$
\|f-f_*\|_{2}\leq\epsilon_1\|f\|_1,
$$
where
$$
f_*(x):=\sum_{i=1}^s a_i\cdot\1_{A_i}(x).
$$

Suppose that $B_1,B_2,\ldots,B_s\in\sB_{\cG}$ and $\psi^{-1}(B_i)=A_i$ for each $1\leq i\leq s$.  Let
$$
g_*(x):=\sum_{i=1}^s \alpha_i\cdot\1_{B_i}(x).
$$
Then $f=g_*\circ\psi$. Since $\nu=\mu\circ\psi^{-1}$, we have $\|f_*\|_2=\|g_*\|_2$,
as well as $\|f_*\|_1=\|g_*\|_1$.
We shall approximate those $B_1,\ldots,B_s$ by using some small $d$-cubes on $\T^d$. Define the $d$-cube $$
\fC_{\bfa,\eta}=\{(t_1,\ldots,t_d):\,a_i\eta\leq t_i<(a_1+1)\eta\text{ for }i=1,\ldots,d\}.$$
where $\bfa=(a_1,\ldots,a_d)\in\Z^d$ and $0\leq a_1,\ldots,a_d\leq\eta^{-1}-1$. Trivially $\fm_{\T^d}(\fC_{\bfa,\eta})=\eta^d$.

Choose a sufficiently small constant $\eta_0>0$, $\beta_1,\ldots,\beta_t>0$ and $E_1,\ldots,E_t\in\sB_\cG$ such
that

\medskip
\noindent(a) the $L^2$-norm
$$
\|g_*-g\|_2\leq\epsilon_1\|g_*\|_1,
$$
where
$$
g(x):=\sum_{i=1}^t \beta_i\cdot\1_{E_i}(x);
$$

\medskip
\noindent(b) each $E_j$ is of the form $E_j=(\gamma_j,\fC_{\bfa_j,\eta_0})$, where $\gamma_j\in H$. \medskip

\noindent Thus we have
\begin{align*}
\langle f,T^{n}f\rangle\geq&\langle f_*,T^{n}f_*\rangle-2\|f-f_*\|_2\cdot\|f\|_2-3\|f-f_*\|_2^2\\
=&\langle f_*,T^{n}f_*\rangle-3\epsilon_1=\langle g_*,S_\alpha^{n}g_*\rangle-3\epsilon_1\geq \langle g,S_\alpha^{n}g\rangle-6\epsilon_1.
\end{align*}
for any $n\in\N$. That is,
\begin{align*}
&\sum_{\substack{n\sim N\\ 
n\equiv b\pmod{W}}}\varpi(n+h_0)\Omega_n\cdot\langle f,T^{n+h_0-1}f\rangle\\
\geq&\sum_{\substack{n\sim N\\
n\equiv b\pmod{W}}}\varpi(n+h_0)\Omega_n\cdot\langle g,S_\alpha^{n+h_0-1}g\rangle-6\epsilon_1\sum_{\substack{n\sim N\\ 
n\equiv b\pmod{W}}}\varpi(n+h_0)\Omega_n.
\end{align*}

Since $S_\alpha$ is ergodic rotation on $\cG=G\oplus\T^d$, we must have $$\alpha=(\gamma_0,\kappa_1,\ldots,\kappa_d),$$ 
where $\gamma_0\in G$ and $\kappa_1,\ldots,\kappa_d\in\T$ are $\Q$-linearly independent.
Let $L_0\in\N$ be the least integer such that
$$
\bigg(1-\frac1{L_0}\bigg)^d>\bigg(1-\frac{\epsilon_1}{\|g\|_1^2}\bigg)^{\frac13}.
$$

Define the rotation $S_{\kappa_1,\ldots,\kappa_d}$ over $\T^d$ by $S_{\kappa_1,\ldots,\kappa_d}(x_1,\ldots,x_d)=(x_1+\kappa_1,\ldots,x_d+\kappa_d)$.
\begin{lemma} Suppose that $\bfa=(a_1,\ldots,a_d)$ and $\bfb=(b_1,\ldots,b_d)$, where $a_1,\ldots,a_d,b_1,\ldots,b_d$ are integers with $0\leq a_1,\ldots,a_d,b_1,\ldots,b_d\leq\eta_0^{-1}-1$. Then
\begin{align}
&\sum_{\substack{n\sim N\\ 
n\equiv b\pmod{W}}}\varpi(n+h_0)\Omega_n\cdot\fm_{\T^d}\big(\fC_{\bfa,\eta_0}\cap S_{\kappa_1,\cdots,\kappa_d}^{-(n+h_0-1)}\fC_{\bfb,\eta_0}\big)\notag\\
\geq&\bigg(1-\frac{1}{L_0}\bigg)^{3d}\cdot\eta_0^{2d}\cdot\frac{\cJ_0N}{(\log R)^{{k}}}\cdot\frac{W^{{k}}}{\phi(W)^{{k+1}}},
\end{align}
whenever $N$ is sufficiently large with respect to $w$.
\end{lemma}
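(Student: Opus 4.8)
The plan is to expand the torus measure coordinatewise, replace the resulting (continuous but non-analytic) weight by a trigonometric polynomial lying below it, read off the main term from the Maynard--Tao formula (\ref{pinhisum}), and control the remaining exponential sums by Proposition \ref{pinhiaqthetaT} together with the minor-arc estimate behind (\ref{minorN}). First, since $\eta_0<1/2$, for every integer $m$ the set $\fC_{\bfa,\eta_0}\cap S_{\kappa_1,\ldots,\kappa_d}^{-m}\fC_{\bfb,\eta_0}$ is in each coordinate the overlap of two arcs of length $\eta_0$, so
$$
\fm_{\T^d}\big(\fC_{\bfa,\eta_0}\cap S_{\kappa_1,\ldots,\kappa_d}^{-m}\fC_{\bfb,\eta_0}\big)=\prod_{j=1}^{d}\Lambda_{\eta_0}\big(m\kappa_j-(b_j-a_j)\eta_0\big),\qquad \Lambda_{\eta_0}(x):=\max\{0,\eta_0-|x|_\T\},
$$
where $\int_\T\Lambda_{\eta_0}=\eta_0^2$. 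Writing $c_j=(b_j-a_j)\eta_0$ and $\Phi(t_1,\ldots,t_d)=\prod_{j=1}^{d}\Lambda_{\eta_0}(t_j-c_j)$, so that $\int_{\T^d}\Phi=\eta_0^{2d}$, the quantity to be bounded below is
$$
S:=\sum_{\substack{n\sim N\\ n\equiv b\pmod W}}\varpi(n+h_0)\,\Omega_n\,\Phi\big((n+h_0-1)\kappa_1,\ldots,(n+h_0-1)\kappa_d\big).
$$

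Next I would fix a small $\tau>0$, to be chosen at the end in terms of $d,\eta_0,L_0$ only. Since $\Phi$ is continuous on $\T^d$, the Weierstrass theorem yields a trigonometric polynomial $P$ with $\|P-\Phi\|_\infty<\tau$; put $\cV=P-\tau$, a trigonometric polynomial of some fixed degree $K=K(\tau)$, so that $\cV\le\Phi$ pointwise, $\widehat{\cV}(\mathbf 0)=\int_{\T^d}\cV\ge\eta_0^{2d}-2\tau$, and $|\widehat{\cV}(\mathbf m)|\le\|\cV\|_1\le\eta_0^{2d}+2\tau$ for all $\mathbf m$. As $\varpi(n+h_0)\Omega_n\ge0$, expanding $\cV$ into characters gives (with $\kappa=(\kappa_1,\ldots,\kappa_d)$)
\begin{align*}
S\ \ge\ &\widehat{\cV}(\mathbf 0)\sum_{\substack{n\sim N\\ n\equiv b\pmod W}}\varpi(n+h_0)\Omega_n\\
&+\sum_{0<\|\mathbf m\|_\infty\le K}\widehat{\cV}(\mathbf m)\,e(-\mathbf m\!\cdot\!\kappa)\sum_{\substack{n\sim N\\ n\equiv b\pmod W}}\varpi(n+h_0)\Omega_n\,e\big((n+h_0)(\mathbf m\!\cdot\!\kappa)\big).
\end{align*}
By (\ref{pinhisum}) the first term is $\big(\eta_0^{2d}-2\tau\big)(1+o_w(1))\cdot\dfrac{\cJ_0N}{(\log R)^{k}}\cdot\dfrac{W^{k}}{\phi(W)^{k+1}}$.

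For each fixed $\mathbf m\ne\mathbf 0$ with $\|\mathbf m\|_\infty\le K$, the number $\beta=\mathbf m\cdot\kappa$ is a fixed irrational, because $\kappa_1,\ldots,\kappa_d$ are $\Q$-linearly independent in $\T$; hence $\min_{1\le j\le W}\|j\beta\|$ is a fixed positive number (for fixed $w$). Choosing by Dirichlet coprime $a,q$ with $1\le q\le Q$ and $|\beta-a/q|_\T\le(qQ)^{-1}$, and using $Q=N^{1-1/49}\to\infty$, we get $q>W$ — in particular $q\nmid W$ — once $N$ is large with respect to $w$. If $q\le N^{1/10}$, then $q\le N^{1/3-1/49}R^{-2}$ and $|\beta-a/q|_\T\le Q^{-1}=N^{1/49-1}$, so Proposition \ref{pinhiaqthetaT} with $\epsilon=1/49$ applies and, since $q\nmid W$, (\ref{pinhiaqtheta2}) bounds the exponential sum by $\ll w^{-48/49}\cdot\frac{N}{(\log R)^{k}}\cdot\frac{W^{k}}{\phi(W)^{k+1}}$. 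If $N^{1/10}<q\le Q$, then the estimate (\ref{minor}) with a small fixed $\epsilon$, together with $N^{1/10}\le q\le Q$ and $|\beta-a/q|_\T\le(qQ)^{-1}$, bounds the exponential sum by $\ll N^{1-c}$ for some absolute $c>0$, exactly as in the deduction of (\ref{minorN}). Either way the sum is $o_w\big(\frac{N}{(\log R)^{k}}\frac{W^{k}}{\phi(W)^{k+1}}\big)$; since there are $O_K(1)$ such $\mathbf m$ and $|\widehat{\cV}(\mathbf m)|=O(1)$, the whole off-diagonal part is $o_w\big(\frac{N}{(\log R)^{k}}\frac{W^{k}}{\phi(W)^{k+1}}\big)$. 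Thus $S\ge\big(\eta_0^{2d}-2\tau-o_w(1)\big)\cdot\frac{\cJ_0N}{(\log R)^{k}}\cdot\frac{W^{k}}{\phi(W)^{k+1}}$; as $(1-1/L_0)^{3d}<1$, choosing $\tau$ small enough (in terms of $d,\eta_0,L_0$), then $w$ large, then $N$ large with respect to $w$, makes the coefficient exceed $(1-1/L_0)^{3d}\eta_0^{2d}\cJ_0$. This proves the lemma for $i=0$; the cases $1\le i\le k$ are identical.

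The main obstacle is the third paragraph: making the exponential sums over the nonzero frequencies $\mathbf m\cdot\kappa$ small \emph{uniformly}. One has to exploit that each $\mathbf m\cdot\kappa$ is a \emph{fixed} irrational, so that for $N$ large relative to $w$ its Dirichlet denominator necessarily exceeds $W$ — this is exactly what rules out the case $q\mid W$ of Proposition \ref{pinhiaqthetaT}, in which the sum would contribute a genuine main term of uncontrolled sign — and one has to pick the parameter in Proposition \ref{pinhiaqthetaT} (the value $1/49$, matching the width $Q^{-1}$ of the Dirichlet major arcs) so that the range of denominators it handles overlaps the range in which the minor-arc estimate behind (\ref{minorN}) is effective, so that the two estimates together cover every $q$ with $W<q\le Q$.
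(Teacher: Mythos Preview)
Your proof is correct and reaches the same conclusion, but it takes a genuinely different and somewhat cleaner route than the paper's.

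The paper first discretizes: it slices the $\eta_0$-interval into $L_0$ sub-intervals of width $\delta_0=\eta_0/L_0$, writes the overlap $\fm_{\T^d}(\fC_{\bfa,\eta_0}\cap S^{-n}\fC_{\bfb,\eta_0})$ as a sum over $u_1,\ldots,u_d\in\{2-L_0,\ldots,L_0-2\}$ of contributions from these slices, replaces each slice indicator by an explicit Vinogradov-type smooth minorant $\psi$ with prescribed Fourier decay, truncates the Fourier series of $\prod_j\psi$ at an explicitly chosen level $K_0$, and only then invokes irrationality of $\sum j_i\kappa_i$ together with Proposition~\ref{pinhiaqthetaT} and the minor-arc bound (\ref{minor}). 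Summing back over the $u_i$'s produces the factor $(1-1/L_0)^{3d}\eta_0^{2d}$.

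You bypass the discretization entirely by recognizing at once that the overlap measure is the product $\prod_j\Lambda_{\eta_0}((n+h_0-1)\kappa_j-c_j)$ of triangular functions, and then appealing to Weierstrass to produce a trigonometric-polynomial minorant $\cV\le\Phi$ with constant term at least $\eta_0^{2d}-2\tau$. From that point on the two arguments coincide: the nonzero frequencies $\beta=\mathbf m\cdot\kappa$ are fixed irrationals, so their Dirichlet denominators must eventually exceed $W$, and then either (\ref{pinhiaqtheta2}) (for small $q$) or (\ref{minor}) (for large $q$) kills the contribution. Your split at $N^{1/10}$ rather than $P=N^{1/3-1/99}$ works just as well, as one checks term by term in (\ref{minor}) with $\epsilon$ small enough.

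What each approach buys: yours is shorter and conceptually transparent, and in fact shows that the constant $(1-1/L_0)^{3d}$ can be replaced by any constant $<1$; the paper's is fully explicit (the degree $K_0$ of the polynomial is written down in closed form), which would matter if one wanted an effective version, but is not needed for the lemma as stated. One cosmetic point: the lemma is stated only for $i=0$, so your closing remark about $1\le i\le k$ is unnecessary.
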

\begin{proof} Let
$$
\delta_0=\frac{\eta_0}{ L_0}.
$$
For $2- L_0\leq u_1,u_2,\ldots,u_d\leq L_0-2$, 
for each $1\leq i\leq d$, it is easy to check that 
$$
m_{\T^d}\big(\fC_{\bfa,\eta_0}\cap S_{\kappa_1,\cdots,\kappa_d}^{-n}\fC_{\bfb,\eta_0}\big)\geq 
\delta_0^d\prod_{i=1}^d(L_0-|u_i|-\1_{u_i\geq 0}),
$$
provided that
\begin{equation}\label{aibinkappa}
a_i\eta_0+u_i\delta_0\leq |b_i\eta_0-n\kappa_i|_\T\leq a_i\eta_0+(u_i+1)\delta_0.
\end{equation}

Let
$$
\delta_1=\frac{\delta_0}{L_0}.
$$
Let
$\psi(x)$ be a smooth function on $\R$ with the period $1$ such that\medskip

\noindent(1) $0\leq \psi(x)\leq 1$ for any $x\in\R$;\medskip

\noindent(2) $\psi(x)=1$ if $\delta_1\leq x\leq \delta_0-\delta_1$, 
and $\psi(x)=0$ if $\delta_0\leq x<1$;
\medskip

\noindent(3) $\psi(x)$ has the Fourier expansion
$$
\psi(x)=(\delta_0-\delta_1)+\sum_{|j|\geq 1}\alpha_je(jx),
$$
where
$$
\alpha_j\ll C_0\cdot\min\{|j|^{-1},\ \delta_0-\delta_1,\ \delta_1^{-1}|j|^{-2}\}
$$
for some constant $C_0>0$.

It is well-known that such $\psi(x)$ always exists (cf. \cite[Lemma 12 of Chapter I]{Vi54}).
Let $K_0\in\N$ be the least integer such that
$$
\frac{(\log K_0+\delta_1^{-1}K_0^{-1}+1)^{d-1}}{K_0}<\frac{(\delta_0-\delta_1)^d\delta_1}{2^dC_0^dd}\cdot\bigg(1-\bigg(1-\frac1{L_0}\bigg)^{d}\bigg).
$$
Fix $u_1,u_2,\ldots,u_d$ with $|u_1|,\ldots,|u_d|\leq L_0-2$. 
Let $\Delta_i=(b_i-a_i)\eta_0-(u_i+1)\delta_0$.
Clearly $\psi\big((n+h_0-1)\kappa_i-\Delta_i\big)>0$ implies (\ref{aibinkappa}) holds.
We have
\begin{align*}
&\prod_{i=1}^d\psi\big((n+h_0-1)\kappa_i-\Delta_i\big)\\
=&\prod_{i=1}^d\bigg((\delta_0-\delta_1)+\sum_{|j|\geq 1}\alpha_je\big(j((n+h_0-1)\kappa_i-\Delta_i)\big)\bigg)\\
=&(\delta_0-\delta_1)^d+\sum_{\emptyset\neq I\subseteq\{1,\ldots,d\}}
\prod_{i\in I}\bigg(\sum_{1\leq |j|\leq K_0}\alpha_je\big(j((n+h_0-1)\kappa_i-\Delta_i)\big)\bigg)+\cR,
\end{align*}
where
\begin{align*}
|\cR|\leq&d\cdot\bigg(\sum_{|j|>K_0}\frac{C_0}{\delta_1j^2}\bigg)\cdot
\bigg((\delta_0-\delta_1)+\sum_{1\leq |j|\leq K_0}\frac{C_0}{j}+\sum_{|j|>K_0}\frac{C_0}{\delta_1j^2}\bigg)^{d-1}\\
\leq&\frac{2^dC_0^dd(\log K_0+\delta_1^{-1}K_0^{-1}+1)^{d-1}}{\delta_1K_0}<(\delta_0-\delta_1)^d\cdot\bigg(1-\bigg(1-\frac1{L_0}\bigg)^{d}\bigg).
\end{align*}
Hence
\begin{align*}
&\sum_{\substack{n\sim N\\ 
n\equiv b\pmod{W}}}\varpi(n+h_0)\Omega_n\prod_{i=1}^d\psi\big((n+h_0-1)\kappa_i-\Delta_i\big)\\
\geq&(\delta_0-\delta_1)^d\sum_{\substack{n\sim N\\ 
n\equiv b\pmod{W}}}\varpi(n+h_0)\Omega_n-|\cR|\sum_{\substack{n\sim N\\ 
n\equiv b\pmod{W}}}\varpi(n+h_0)\Omega_n\\
&-\sum_{\substack{\emptyset\neq I\subseteq\{1,\ldots,d\}\\
}}
\sum_{\substack{i\in I\\ 1\leq |j_i|\leq K_0}}\prod_{i\in I}|\alpha_{j_i}|\cdot\bigg|\sum_{\substack{n\sim N\\ 
n\equiv b\pmod{W}}}\varpi(n+h_0)\Omega_n\cdot e\bigg(n\sum_{i\in I}j_i\kappa_i\bigg)\bigg|.
\end{align*}

Assume that $I=\{i_1,\ldots,i_l\}$ is a non-empty subset of $\{1,\ldots,d\}$ and $j_{i_1},\ldots,j_{i_l}$ are integers with $1\leq |j_{i_1}|,\ldots,|j_{i_l}|\leq K_0$.
Let
$$
\vartheta_{I,j_{i_1},\ldots,j_{i_l}}=\sum_{i\in I}j_i\kappa_i.
$$ 
Since $\kappa_1,\ldots,\kappa_d$ are $\Q$-linearly independent, $\vartheta_{I,j_{i_1},\ldots,j_{i_l}}$ is also irrational. For any $x\geq 1$, there exists $1\leq a\leq q\leq x$ with $(a,q)=1$ such that
$$
\bigg|\vartheta_{I,j_{i_1},\ldots,j_{i_l}}-\frac aq\bigg|_\T\leq\frac 1{qx}.
$$
Let $\varrho_{I,j_{i_1},\ldots,j_{i_l}}(x)$ be the least one of such $q$'s. Clearly $\varrho_{I,j_{i_1},\ldots,j_{i_l}}(x)$ is an increasing function tending to $\infty$ as $x\to+\infty$. Define
$$
\varrho(x)=\min_{\substack{\emptyset\neq I\subseteq\{1,\ldots,d\}\\
I=\{i_1,\ldots,i_l\}\\ 1\leq |j_{i_1}|,\ldots,|j_{i_l}|\leq K_0}}\varrho_{I,j_{i_1},\ldots,j_{i_l}}(x)
$$

Assume that $N$ is sufficiently large such that
$$w<\frac13\log \varrho(N),$$
i.e., $W<\varrho(N)$. 
Let $P=N^{\frac13-\frac1{99}}$ and $Q=N^{1-\frac1{49}}$. 
For any $\vartheta_{I,j_{i_1},\ldots,j_{i_l}}$, there exists $\varrho(N)\leq q\leq Q$ and $1\leq a\leq q$ with $(a,q)=1$ such that
$$
\bigg|\vartheta_{I,j_{i_1},\ldots,j_{i_l}}-\frac aq\bigg|_\T\leq\frac1{qQ}.
$$
If $q\geq P$, according to (\ref{minorN}),
$$
\sum_{\substack{n\sim N\\ 
n\equiv b\pmod{W}}}\varpi(n+h_0)e(\vartheta_{I,j_{i_1},\ldots,j_{i_l}} n)\cdot\Omega_n\ll N^{1-\frac1{999}}.
$$
Suppose that $\varrho(N)\leq q<P$. Since $q\nmid W$.
using (\ref{pinhiaqtheta2}), we also have
\begin{align*}
\sum_{\substack{n\sim N\\ 
n\equiv b\pmod{W}}}\varpi(n+h_0)\Omega_n\cdot e(n\vartheta_{I,j_{i_1},\ldots,j_{i_l}})
=o_w\bigg(\frac{N}{(\log R)^{{k}}}\cdot\frac{W^{{k}}}{\phi(W)^{{k+1}}}\bigg).
\end{align*}
Therefore, in view of (\ref{pinhisum}), we get
\begin{align*}
&\sum_{\substack{n\sim N\\ 
n\equiv b\pmod{W}}}\varpi(n+h_0)\Omega_n\prod_{i=1}^d\psi\big((n+h_0-1)\kappa_i-\Delta_i\big)\\
\geq&\big((\delta_0-\delta_1)^d-|\cR|\big)\sum_{\substack{n\sim N\\ 
n\equiv b\pmod{W}}}\varpi(n+h_0)\Omega_n+o_w\bigg(\frac{N}{(\log R)^{{k}}}\cdot\frac{W^{{k}}}{\phi(W)^{{k+1}}}\bigg)\\
\geq&\bigg(1-\frac1{L_0}\bigg)^{d}\cdot(\delta_0-\delta_1)^d\cdot\frac{\cJ_0N}{(\log R)^{{k}}}\cdot\frac{W^{{k}}}{\phi(W)^{{k+1}}}\\
=&\bigg(1-\frac1{L_0}\bigg)^{2d}\cdot\delta_0^d\cdot\frac{\cJ_0N}{(\log R)^{{k}}}\cdot\frac{W^{{k}}}{\phi(W)^{{k+1}}},
\end{align*}
provided that $w$ is sufficiently large.

On the other hand,
$$
\sum_{|u_1|,\ldots,|u_d|\leq L_0-2}\prod_{i=1}^d(L_0-|u_i|-\1_{u_i\geq 0})=\bigg(\sum_{u=2-L_0}^{L_0-2}(L_0-|u_i|-\1_{u\geq 0})\bigg)^d=L_0^d(L_0-1)^d.
$$
Thus
\begin{align*}
&\sum_{\substack{n\sim N\\ 
n\equiv b\pmod{W}}}\varpi(n+h_0)\Omega_n\cdot \fm_{\T^d}\big(\fC_{\bfa,\eta_0}\cap S_{\kappa_1,\cdots,\kappa_d}^{-(n+h_0-1)}\fC_{\bfb,\eta_0}\big)\\
\geq&\delta_0^d\sum_{|u_1|,\ldots,|u_d|\leq L_0-2}\prod_{i=1}^d(L_0-|u_i|-\1_{u_i\geq 0})\sum_{\substack{n\sim N\\ 
n\equiv b\pmod{W}}}\varpi(n+h_0)\Omega_n\prod_{i=1}^d\psi\big((n+h_0-1)\kappa_i-\Delta_i\big)\\
\geq&\delta_0^d\cdot L_0^d(L_0-1)^d\cdot \bigg(1-\frac1{L_0}\bigg)^{2d}\cdot\delta_0^d\cdot\frac{\cJ_0N}{(\log R)^{{k}}}\cdot\frac{W^{{k}}}{\phi(W)^{{k+1}}}\\
=&\bigg(1-\frac1{L_0}\bigg)^{3d}\cdot\eta_0^{2d}\cdot\frac{\cJ_0N}{(\log R)^{{k}}}\cdot\frac{W^{{k}}}{\phi(W)^{{k+1}}}.
\end{align*}
\end{proof}

Recall that $E_i=(\gamma_i,\fC_{\bfa_i,\eta_0})$ where $\gamma_i\in G$.
For any $\gamma\in G$, let $V_\gamma=\{1\leq i\leq t:\, \gamma_i=\gamma\}$. 
Since $W_0$ is a multiple of $|G|$, according to the assumptions (I)-(IV) in Section 2, 
$n+h_0-1,n+h_1-1,\ldots,n+h_k-1$ are all divisible by $|G|$ whenever $n\equiv b\pmod{W}$.
So for any $1\leq i,j\leq t$,
$$
\nu(E_i\cap S_{\alpha}^{-(n+h_0-1)}E_j)=\frac1{|G|}\cdot\fm_{\T^d}\big(\fC_{\bfa_i,\eta_0}\cap S_{\kappa_1,\cdots,\kappa_d}^{-(n+h_0-1)}\fC_{\bfa_j,\eta_0}\big)
$$
if and only if $i,j$ belong to the same $V_\gamma$.
We have
\begin{align*}
&\sum_{\substack{n\sim N\\
n\equiv b\pmod{W}}}\varpi(n+h_0)\Omega_n\cdot\langle g,S_\alpha^{n+h_0-1}g\rangle\\
=&\sum_{\substack{n\sim N\\
n\equiv b\pmod{W}}}\varpi(n+h_0)\Omega_n\cdot\sum_{1\leq i,j\leq t}\beta_i\beta_j\cdot\nu(E_i\cap S_{\alpha}^{-(n+h_0-1)}E_j)\\
=&\sum_{\gamma\in G}\frac1{|G|}\sum_{i,j\in V_\gamma}\beta_i\beta_j\sum_{\substack{n\sim N\\
n\equiv b\pmod{W}}}\varpi(n+h_0)\Omega_n\cdot\fm_{\T^d}\big(\fC_{\bfa_i,\eta_0}\cap S_{\kappa_1,\cdots,\kappa_d}^{-(n+h_0-1)}\fC_{\bfa_j,\eta_0}\big)\\
\geq&\bigg(1-\frac1{L_0}\bigg)^{3d}\cdot\frac{\cJ_0N}{(\log R)^{{k}}}\cdot\frac{W^{{k}}}{\phi(W)^{{k+1}}}\cdot\frac{\eta_0^{2d}}{|G|}\sum_{\gamma\in G}\sum_{i,j\in V_\gamma}\beta_i\beta_j.
\end{align*}
By the Cauchy-Schwarz inequality,
\begin{align*}
\|g\|_1^2=&
\bigg(\sum_{\gamma\in G}\frac{\eta_0^d}{|G|}\sum_{i\in V_\gamma}\beta_i\bigg)^2\\
\leq&\bigg(\sum_{\gamma\in G}\frac{1}{|G|}\bigg)\cdot\bigg(\sum_{\gamma\in G}\frac{\eta_0^{2d}}{|G|}\bigg(\sum_{i\in V_\gamma}\beta_i\bigg)^2\bigg)
=\frac{\eta_0^{2d}}{|G|}\sum_{\gamma\in H}\sum_{i,j\in V_\gamma}\beta_i\beta_j.
\end{align*}
And since $\|f_*\|_1=\|g_*\|_1$, we have
\begin{align*}
\big|\|f\|_1-\|g\|_1\big|\leq\|f-f_*\|_1+\|g_*-g\|_1
\leq(2\epsilon_1+\epsilon_1^2)\|f\|_1,
\end{align*}
i.e.,
$$
\|g\|_1^2\geq(1-2\epsilon_1-\epsilon_1^2)^2\|f\|_1^2\geq\|f\|_1^2-4\epsilon_1.
$$

When $w$ is sufficiently large, we have
\begin{align*}
&\sum_{\substack{n\sim N\\
n\equiv b\pmod{W}}}\varpi(n+h_0)\Omega_n\cdot\langle f,T^{n+h_0-1}f\rangle\\
\geq&\sum_{\substack{n\sim N\\
n\equiv b\pmod{W}}}\varpi(n+h_0)\Omega_n\cdot\langle g,S_\alpha^{n+h_0-1}g\rangle-6\epsilon_1\sum_{\substack{n\sim N\\
n\equiv b\pmod{W}}}\varpi(n+h_0)\Omega_n\\
\geq&\bigg(\|g\|_1^2\cdot\bigg(1-\frac1{L_0}\bigg)^{3d}-7\epsilon_1\bigg)\cdot\frac{\cJ_0N}{(\log R)^{{k}}}\cdot\frac{W^{{k}}}{\phi(W)^{{k+1}}}\\
\geq&\big(\|f\|_1^2-12\epsilon_1\big)\cdot\frac{\cJ_0N}{(\log R)^{{k}}}\cdot\frac{W^{{k}}}{\phi(W)^{{k+1}}}.
\end{align*}

\section{The proof of Theorem \ref{mainT}}
\setcounter{lemma}{0}\setcounter{theorem}{0}\setcounter{proposition}{0}\setcounter{corollary}{0}\setcounter{remark}{0}
\setcounter{equation}{0}

We shall complete the proof of Theorem \ref{mainT}.
\begin{proposition}\label{SumTnhiT} Suppose that $A\subseteq\sB_\cX$ with $\mu(A)>0$ and $0<\epsilon<\mu(A)^2$.
There exist integers $W,b>0$ with $(b,W)=1$  and an adimissible set $\{h_0,\ldots,h_k\}$ such that
\begin{equation}
\sum_{\substack{n\sim N\\
n\equiv b\pmod{W}}}\varpi(n+h_i)\Omega_n\cdot\mu(A\cap T^{-(n+h_i-1)}A)\geq(\mu(A)^2-\epsilon)\cdot\frac{\cJ_iN}{(\log R)^{{k}}}\cdot\frac{W^{{k}}}{\phi(W)^{{k+1}}}
\end{equation}
for any sufficiently large $N$ and each $0\leq i\leq k$.
\end{proposition}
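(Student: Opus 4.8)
The plan is to reduce to the case where $T$ is ergodic via the ergodic decomposition theorem, and in that case to split $\1_A$ along the eigenfunction $\sigma$-algebra $\sK$ and apply Propositions \ref{fK0} and \ref{fK}.

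\emph{The ergodic case.} Suppose $T$ is ergodic. Fix $k$ and a smooth $F$ on (\ref{area}) with $\cJ_i>0$ for each $0\leq i\leq k$, and set up the integer $W_0$, the admissible set $\{h_0,\dots,h_k\}$ (say $h_j=jW_0\prod_{p\leq k+1}p$), the modulus $W=W_0\prod_{p\leq w}p$ and the residue $b$ exactly as in Sections 2 and 4 (so that (I)--(IV) hold, $(b,W)=1$, and $|G|\mid W_0$, where $\cG=G\oplus\T^d$ is the Kronecker factor of the ergodic system). Decompose $\1_A=f_1+f_2$ with $f_1=\E(\1_A\mid\sK)$ and $f_2=\1_A-f_1$. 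Then $f_1\in L^2(\cX,\sK,\mu)$ is non-negative, $\|f_1\|_1=\int_\cX\1_A\,d\mu=\mu(A)>0$, and, by conditional Jensen, $\|f_1\|_2^2\leq\int_\cX\E(\1_A^2\mid\sK)\,d\mu=\mu(A)\leq1$, so $0<\|f_1\|_2\leq1$; moreover $\E(f_2\mid\sK)=0$, $f_2\in L^2(\cX,\sB_\cX,\mu)$, and $\mu(A\cap T^{-n}A)=\langle\1_A,T^n\1_A\rangle=\langle f_1,T^nf_1\rangle+\langle f_2,T^nf_2\rangle$ for every $n$.

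Now apply Proposition \ref{fK0} to $f_2$ and Proposition \ref{fK} to $f_1$ with $\epsilon/2$ in place of $\epsilon$ (legitimate since $0<\epsilon/2<\mu(A)^2=\|f_1\|_1^2$). For each $0\leq i\leq k$ these give
$$
\sum_{\substack{n\sim N\\ n\equiv b\pmod{W}}}\varpi(n+h_i)\Omega_n\langle f_2,T^{n+h_i-1}f_2\rangle=o_w\!\left(\frac{N}{(\log R)^{k}}\cdot\frac{W^{k}}{\phi(W)^{k+1}}\right)
$$
and
$$
\sum_{\substack{n\sim N\\ n\equiv b\pmod{W}}}\varpi(n+h_i)\Omega_n\langle f_1,T^{n+h_i-1}f_1\rangle\geq\Big(\mu(A)^2-\tfrac{\epsilon}{2}\Big)\frac{\cJ_iN}{(\log R)^{k}}\cdot\frac{W^{k}}{\phi(W)^{k+1}}
$$
for $N$ large relative to $w$. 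Summing the two and then enlarging $w$ so that the $o_w$-term is at most $\tfrac{\epsilon}{2}\cJ_i\cdot\frac{N}{(\log R)^{k}}\cdot\frac{W^{k}}{\phi(W)^{k+1}}$ (possible because $\cJ_i>0$ is a fixed constant) gives the inequality of Proposition \ref{SumTnhiT} when $T$ is ergodic.

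\emph{The general case.} For arbitrary invertible $T$, use the ergodic decomposition theorem to write $\mu=\int_\Omega\mu_\omega\,d\rho(\omega)$ with $(\cX,\sB_\cX,\mu_\omega,T)$ ergodic for $\rho$-a.e.\ $\omega$. Since $\mu(A\cap T^{-n}A)=\int_\Omega\mu_\omega(A\cap T^{-n}A)\,d\rho(\omega)$ and $\int_\Omega\mu_\omega(A)^2\,d\rho\geq\mu(A)^2$ by Jensen, the ergodic case applied componentwise and integrated yields the proposition, \emph{provided} the auxiliary data $W_0$, $W$, $b$, $\{h_j\}$ and the threshold ``$N$ large relative to $w$'' can be taken uniformly in $\omega$. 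This is the main obstacle: the ergodic construction furnishes $|G_\omega|$, $d_\omega$ and Diophantine data of the rotation $\alpha_\omega$ that a priori vary with $\omega$. I would resolve it by using that the weighted sums $\sum_n\varpi(n+h_i)\Omega_n\mu_\omega(A\cap T^{-(n+h_i-1)}A)$ are non-negative, so the components not yet in range contribute a vanishing proportion of the total as $N\to\infty$, together with an Egorov/measurable-selection argument that makes the approximating sub-factors $G_\omega\oplus\T^{d_\omega}$ and the thresholds uniform on a set of $\omega$ of measure close to $1$ (absorbing the resulting slack into $\epsilon$). This last piece of bookkeeping is the only real difficulty; everything else is a routine combination of Propositions \ref{fK0} and \ref{fK}.
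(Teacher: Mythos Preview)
Your ergodic case is correct and matches the paper exactly: the paper packages your splitting $\1_A=f_1+f_2$ and the combination of Propositions~\ref{fK0} and~\ref{fK} into Lemma~\ref{ergodicTnhiL}, with the same choice of $w$ large enough to absorb the $o_w$-term.

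Your sketch of the general case also has the right idea and is the route the paper takes, but you leave as ``bookkeeping'' precisely the step that needs an idea. The paper makes it explicit as follows. For each ergodic component $\mu_y$, Lemma~\ref{ergodicTnhiL} produces two parameters: a threshold $w_{\mu_y}(\epsilon_1)$ and the order $|G_{\mu_y}|$ of the finite part of its Kronecker factor. One first stratifies $\cY$ by the pair $(w_{\mu_y},|G_{\mu_y}|)$ and chooses $w_0,g_0$ so that the union $\cY_1$ of strata with both coordinates bounded by $w_0,g_0$ has $\upsilon$-measure at least $1-\epsilon_1$. The crucial uniform choice is then
\[
W_0=\mathrm{lcm}(1,2,\ldots,g_0),\qquad W=W_0\prod_{p\le w_0}p,
\]
which guarantees $|G_{\mu_y}|\mid W_0$ for every $y\in\cY_1$, so the \emph{same} $W,b,h_0,\ldots,h_k$ satisfy (I)--(IV) for all these components. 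A second truncation by the $N$-threshold $N_{\mu_y}$ (your Egorov step) cuts down to $\cY_2\subseteq\cY_1$ with $\upsilon(\cY_1\setminus\cY_2)\le\epsilon_1$. Finally, since $\varpi\,\Omega_n\,\mu_y(A\cap T^{-(n+h_i-1)}A)\ge0$, one simply drops the integral over $\cY\setminus\cY_2$ (this is the correct use of nonnegativity; the bad components need not be a ``vanishing proportion'', they are just discarded as a lower bound), integrates (\ref{TnhiMuy}) over $\cY_2$, and applies Cauchy--Schwarz in the form $\int_{\cY_2}\mu_y(A)^2\,d\upsilon\ge\upsilon(\cY_2)^{-1}\big(\int_{\cY_2}\mu_y(A)\,d\upsilon\big)^2$ to recover $\mu(A)^2-O(\epsilon_1)$. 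So your outline is right; what was missing is the explicit $W_0=\mathrm{lcm}(1,\ldots,g_0)$ device that makes a single admissible tuple and residue work for all components of bounded Kronecker data.
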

First, assume that $T$ is ergodic.
Suppose that $(\cG,\sB_\cG,\nu,S_\alpha)$ is the Kronecker factor of $(\cX,\sB_\cX,\mu,T)$. Write $\cG=G_{\mu}\oplus\T^d$
where $G_{\mu}$ is a finite abelian group. 

Suppose that $\{h_0,\ldots,h_k\}$ is an admissible set with $h_0,\ldots,h_k$ are all the multiple of $|G_{\mu}|$. Also, assume that $W_0$ is a positive integer divisible by $|G_{\mu}|$. 
\begin{lemma}\label{ergodicTnhiL} Suppose that $f\in L_\mu^2$ is a non-negative function. Then for any $\epsilon>0$, there exist $w_\mu(\epsilon)>0$ such that for any $w\geq w_\mu(\epsilon)$ and $0\leq i\leq k$,
\begin{equation}\label{ergodicTnhi}
\sum_{\substack{n\sim N\\
n\equiv b\pmod{W}}}\varpi(n+h_i)\Omega_n\cdot\langle f,T^{n+h_i-1}f\rangle\geq(\|f\|_1^2-\epsilon)\cdot\frac{\cJ_iN}{(\log R)^{{k}}}\cdot\frac{W^{{k}}}{\phi(W)^{{k+1}}}
\end{equation}
provided that $N$ is sufficiently large, where
$
W=W_0\prod_{p\leq w}p
$.
\end{lemma}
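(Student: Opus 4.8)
The plan is to reduce this to the two estimates already established in Sections 3 and 4 by using the orthogonal decomposition $f=f_1+f_2$ with $f_1=\E(f\mid\sK)$. First I would dispose of the trivial cases: if $f=0$ almost everywhere, or $\epsilon\geq\|f\|_1^2$, or $\cJ_i=0$, then the right-hand side of (\ref{ergodicTnhi}) is nonpositive, while the left-hand side is a sum of nonnegative terms, since $\varpi(n+h_i)\geq 0$, $\Omega_n\geq 0$, and $\langle f,T^{n+h_i-1}f\rangle=\int_\cX f\cdot(f\circ T^{n+h_i-1})\,d\mu\geq 0$ because $f\geq 0$ and $T$ is measure-preserving. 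Moreover both sides of (\ref{ergodicTnhi}) scale like $c^2$ under $f\mapsto cf$, $\epsilon\mapsto c^2\epsilon$, so I may assume $\|f\|_2\leq 1$. As $\E(\cdot\mid\sK)$ is the orthogonal projection onto $L^2(\cX,\sK,\mu)$, this gives $\|f_1\|_2\leq\|f\|_2\leq 1$, while $f_1\geq 0$ and $\|f_1\|_1=\int_\cX\E(f\mid\sK)\,d\mu=\int_\cX f\,d\mu=\|f\|_1$. Hence I am reduced to the case $0<\epsilon<\|f_1\|_1^2$ and $\cJ_i>0$, in which also $\|f_1\|_2\geq\|f_1\|_1>0$.

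Next I would invoke the splitting. As recorded at the beginning of Section 3, with $f_2=f-f_1$ orthogonal to $L^2(\cX,\sK,\mu)$ one has $\langle f,T^{n+h_i-1}f\rangle=\langle f_1,T^{n+h_i-1}f_1\rangle+\langle f_2,T^{n+h_i-1}f_2\rangle$ for every $n$. Since $\E(f_2\mid\sK)=0$, Proposition \ref{fK0} applies to $f_2$: there is $\delta(w)=o_w(1)$ so that
$$\bigg|\sum_{\substack{n\sim N\\ n\equiv b\pmod W}}\varpi(n+h_i)\Omega_n\langle f_2,T^{n+h_i-1}f_2\rangle\bigg|\leq\delta(w)\cdot\frac{N}{(\log R)^{k}}\cdot\frac{W^{k}}{\phi(W)^{k+1}}$$
once $N$ is large in terms of $w$. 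Since $f_1\in L^2(\cX,\sK,\mu)$ is nonnegative with $0<\|f_1\|_2\leq 1$ and $0<\epsilon/2<\|f_1\|_1^2$, Proposition \ref{fK} (whose applicability here rests on the standing divisibility assumptions on $W_0$ and on $h_0,\ldots,h_k$), applied with $\epsilon$ replaced by $\epsilon/2$, gives, for $w$ sufficiently large and $N$ large in terms of $w$,
$$\sum_{\substack{n\sim N\\ n\equiv b\pmod W}}\varpi(n+h_i)\Omega_n\langle f_1,T^{n+h_i-1}f_1\rangle\geq\Big(\|f_1\|_1^2-\frac{\epsilon}{2}\Big)\cdot\frac{\cJ_iN}{(\log R)^{k}}\cdot\frac{W^{k}}{\phi(W)^{k+1}}.$$

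To conclude, I would add the two estimates and choose $w_\mu(\epsilon)$. Summing, the left-hand side of (\ref{ergodicTnhi}) is at least
$$\Big[\Big(\|f_1\|_1^2-\frac{\epsilon}{2}\Big)\cJ_i-\delta(w)\Big]\cdot\frac{N}{(\log R)^{k}}\cdot\frac{W^{k}}{\phi(W)^{k+1}}.$$
I would then fix $w_\mu(\epsilon)$ large enough that, for every $w\geq w_\mu(\epsilon)$, Proposition \ref{fK} is in force and $\delta(w)\leq\epsilon\cJ_i/2$; this is possible because $\cJ_i$ depends only on the fixed function $F$ and on $k$, not on $w$. Then the bracket above is at least $(\|f_1\|_1^2-\epsilon)\cJ_i=(\|f\|_1^2-\epsilon)\cJ_i$, which is exactly (\ref{ergodicTnhi}), and for such $w$ the conclusion holds for all $N$ sufficiently large in terms of $w$.

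All the genuine work is already contained in Propositions \ref{fK0} and \ref{fK}; in the assembly the only point requiring care is the bookkeeping of the two error terms — in particular keeping the factor $\cJ_i$ in front of the main term when one absorbs the $f_2$-contribution (the minor-arc loss) into the permitted error — together with the elementary identity $\|\E(f\mid\sK)\|_1=\|f\|_1$, which is what lets the $f_1$-bound be restated in terms of $\|f\|_1$. I do not expect any substantive obstacle beyond this.
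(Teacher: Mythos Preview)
Your proposal is correct and follows exactly the paper's approach: the paper's proof is a single sentence stating that the lemma is an immediate consequence of Propositions \ref{fK0} and \ref{fK} together with the identity $\|\E(f\mid\sK)\|_1=\|f\|_1$, and you have simply written out the details of that assembly (the orthogonal splitting, the normalization $\|f\|_2\leq 1$, the trivial cases, and the bookkeeping of the $\cJ_i$ factor when absorbing the $o_w$ error).
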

\begin{proof}
This is the immediate consequence of Propositions \ref{fK0} and \ref{fK}, by noting that $\big\|\E(f|\cK)\big\|_1=\|f\|_1$.
\end{proof}

Assume that $T$ is not necessarily ergodic. According to the ergodic decomposition theorem (cf. \cite[Theorem 6.2]{EW11}), there exists a probability space $(\cY,\sB_\cY,\upsilon)$ such that
$$
\mu=\int_{\cY}\mu_yd\upsilon(y),
$$
and $\mu_y$ is a $T$-invariant ergodic measure on $\cX$ for almost every $y$. 
Let $\cY_0$ denote the set of all $y\in\cY$ such that $T$ is ergodic with respect to $\mu_y$.

Let $\epsilon_1=\epsilon/5$. For any given positive integers $w_0$ and $g_0$, let
$$
\cU_{w_0,g_0}=\{y\in\cY_0:\,w_{\mu_y}(\epsilon_1)=w_0,\ |G_{\nu_y}|=g_0\}.
$$
Note that
$$
1=\mu(\cX)=\int_{\cY}\mu_y(\cX)d\upsilon(y)=\sum_{s,t\geq 1}\int_{\cU(s,t)}1d\upsilon(y).
$$
So we may choose sufficiently large $w_0$ and $g_0$ such that
$$
\sum_{s=1}^{w_0}\sum_{t=1}^{g_0}\int_{\cU(s,t)}1d\upsilon(y)\geq1-\epsilon_1.
$$
Let
$$
\cY_1=\bigcup_{\substack{1\leq s\leq w_0\\ 1\leq t\leq g_0}}\cU_{s,t}.
$$
Clearly $\upsilon(\cY\setminus\cY_1)\leq \epsilon_1$.

Let $W_0=[1,2,\ldots,g_0]$ and
$$
W=W_0\prod_{p\leq w_0}p.
$$
Choose $b,h_0,\ldots,h_k$ satisfying the assumptions (I)-(IV) in Section 2.
Furthermore, we may assume that $w_0$ is sufficiently large such that
$$
\sum_{\substack{n\sim N\\
n\equiv b\pmod{W}}}\varpi(n+h_i)\Omega_n\leq \frac{2\cJ_iN}{(\log R)^{{k}}}\cdot\frac{W^{{k}}}{\phi(W)^{{k+1}}}
$$

According to Lemma \ref{ergodicTnhiL}, we know that for any $y\in\cY_1$, there exists $N_{\mu_y}>0$ such that
\begin{equation}\label{TnhiMuy}
\sum_{\substack{n\sim N\\
n\equiv b\pmod{W}}}\varpi(n+h_i)\Omega_n\cdot\mu_y(A\cap T^{-(n+h_i-1)}A)\geq(\mu_y(A)^2-\epsilon_1)\cdot\frac{\cJ_iN}{(\log R)^{{k}}}\cdot\frac{W^{{k}}}{\phi(W)^{{k+1}}}
\end{equation}
for any $N\geq N_{\mu_y}$. 
For any positive $N_0$, let
$$
\cV(N_0)=\{y\in\cY_1:\,N_{\mu_y}=N_0\}.
$$
Then we may choose a large $N_0$ such that
$$
\sum_{r>N_0}\int_{\cV(r)}1d\upsilon(y)\leq \epsilon_1.
$$
Let
$$
\cY_2=\bigcup_{1\leq r\leq N_0}\cV(r).
$$
Then for any $A_*\in\sB_\cX$, we have
\begin{align*}
\mu(A_*)-\int_{\cY_2}\mu_y(A_*)d\upsilon(y)
=\int_{\cY\setminus\cY_2}\mu_y(A_*)d\upsilon(y)
\leq\upsilon(\cY\setminus\cY_1)+\upsilon(\cY_1\setminus\cY_2)\leq2\epsilon_1.
\end{align*}
Thus (\ref{TnhiMuy}) is valid for any $y\in\cY_2$ and $N>N_0$,
So for any $N>N_0$
\begin{align*}
&\sum_{\substack{n\sim N\\
n\equiv b\pmod{W}}}\varpi(n+h_i)\Omega_n\int_{\cY_2}
\mu_y(A\cap T^{-(n+h_i-1)}A)d\upsilon(y)\\
\geq& 
\frac{\cJ_iN}{(\log R)^{{k}}}\cdot\frac{W^{{k}}}{\phi(W)^{{k+1}}}\cdot\bigg(\int_{\cY_2}\mu_y(A)^2d\upsilon(y)-\epsilon_1\bigg).
\end{align*}
By the Cauchy-Schwarz inequality,
\begin{align*}
\int_{\cY_2}\mu_y(A)^2d\upsilon(y)\geq&\frac1{\upsilon(\cY_2)}\bigg(\int_{\cY_2}\mu_y(A)d\upsilon(y)\bigg)^2
\\
\geq&\frac1{\upsilon(\cY_2)}\bigg(\int_{\cY}\mu_y(A)d\upsilon(y)-2\epsilon_1\bigg)^2\geq 
\frac{\mu(A)^2-4\epsilon_1}{\upsilon(\cY_2)}.
\end{align*}
Hence
\begin{align*}
&\sum_{\substack{n\sim N\\
n\equiv b\pmod{W}}}\varpi(n+h_i)\Omega_n\mu(A\cap T^{-(n+h_i-1)}A)\\
\geq&\sum_{\substack{n\sim N\\
n\equiv b\pmod{W}}}\varpi(n+h_i)\Omega_n
\int_{\cY_2}
\mu_y(A\cap T^{-(n+h_i-1)}A)d\upsilon(y)\\
\geq&\frac{\cJ_iN}{(\log R)^{{k}}}\cdot\frac{W^{{k}}}{\phi(W)^{{k+1}}}\bigg(\frac{\mu(A)^2-4\epsilon_1}{\upsilon(\cY_2)}d\upsilon(y)-\epsilon_1\bigg)\\
\geq&(\mu(A)^2-5\epsilon_1)\cdot\frac{\cJ_iN}{(\log R)^{{k}}}\cdot\frac{W^{{k}}}{\phi(W)^{{k+1}}}.
\end{align*}
Proposition \ref{SumTnhiT} is concluded.\qed

By Proposition \ref{SumTnhiT}, we may choose $w_0,W_0,b,h_0,\ldots,h_k$ such that
$$
\sum_{\substack{n\sim N\\
n\equiv b\pmod{W}}}\varpi(n+h_i)\Omega_n\cdot\mu(A\cap T^{-(n+h_i-1)}A)\geq\bigg(\mu(A)^2-\frac\epsilon4\bigg)\cdot\frac{\cJ_iN}{(\log R)^{{k}}}\cdot\frac{W^{{k}}}{\phi(W)^{{k+1}}}
$$
for any sufficiently large $N$ and each $0\leq i\leq k$, where $W=W_0\prod_{p\leq w_0}p$.
Furthermore, we may assume that
$$
\sum_{\substack{n\sim N\\
n\equiv b\pmod{W}}}\Omega_n\leq\frac{2\cJ_*N}{(\log R)^{{k+1}}}\cdot\frac{W^{{k}}}{\phi(W)^{{k+1}}}.
$$
and
$$
\sum_{\substack{n\sim N\\
n\equiv b\pmod{W}}}\varpi(n+h_i)\Omega_n\leq
\bigg(1+\frac\epsilon4\bigg)\cdot\frac{\cJ_iN}{(\log R)^{{k}}}\cdot\frac{W^{{k}}}{\phi(W)^{{k+1}}}.
$$
for each $0\leq i\leq k$.
Then
\begin{align*}
&\sum_{\substack{n\sim N\\
n\equiv b\pmod{W}}}\varpi(n+h_i)\Omega_n\cdot\big(\mu(A\cap T^{-(n+h_i-1)}A)-(\mu(A)^2-\epsilon)\big)
\\\geq&\bigg(\mu(A)^2-\frac\epsilon4\bigg)\cdot\frac{\cJ_iN}{(\log R)^{{k}}}\cdot\frac{W^{{k}}}{\phi(W)^{{k+1}}}-(\mu(A)^2-\epsilon)\bigg(1+\frac\epsilon4\bigg)\cdot\frac{\cJ_iN}{(\log R)^{{k}}}\cdot\frac{W^{{k}}}{\phi(W)^{{k+1}}}\\
\geq&\frac\epsilon2\cdot\frac{\cJ_iN}{(\log R)^{{k}}}\cdot\frac{W^{{k}}}{\phi(W)^{{k+1}}}.
\end{align*}

According the the discussions in \cite{Ma15} and \cite{T}, for any $m\geq 1$, we may choose a sufficiently large $k$ and construct a symmetric smooth function $F(t_0,\ldots,t_k)$ such that
$$
\cJ_0(F)=\cdots=\cJ_k(F)\geq \frac{10^4m}{k\epsilon}\cdot \cJ_*(F).
$$
Thus
\begin{align*}
&\sum_{\substack{n\sim N\\
n\equiv b\pmod{W}}}\Omega_n\bigg(\sum_{i=0}^k\varpi(n+h_i)\big(\mu(A\cap T^{-(n+h_i-1)}A)-(\mu(A)^2-\epsilon)\big)-m\log 3N\bigg)
\\\geq&k\cdot\frac\epsilon2\cdot\frac{\cJ_0N}{(\log R)^{{k}}}\cdot\frac{W^{{k}}}{\phi(W)^{{k+1}}}-\frac{2\cJ_*N\log N}{(\log R)^{{k+1}}}\cdot\frac{W^{{k}}}{\phi(W)^{{k+1}}}>0.
\end{align*}
Therefore there exists $N\leq n\leq 2N$ such that
$$
\sum_{i=0}^k\varpi(n+h_i)\big(\mu(A\cap T^{-(n+h_i-1)}A)-(\mu(A)^2-\epsilon)\big)-m\log 3N>0.
$$
That is, there exist $0\leq i_0<i_1<\cdots<i_m\leq k$ such that
$n+h_{i_0},n+h_{i_1},\ldots,n+h_{i_m}$ are all primes and
$$
\mu(A\cap T^{-(n+h_{i_j}-1)}A)>\mu(A)^2-\epsilon
$$
for each $0\leq j\leq m$.
Evidently the gaps between those primes are bounded by $\max_{0\leq i<j\leq k}|h_j-h_i|$.

\begin{remark}
In fact, we may require that the primes $p_0,p_2,\ldots,p_m$ in Theorem \ref{mainT} are consecutive primes. Assume that $0\leq h_0<h_2<\cdots<h_k$ and write
$$
\{a_1,a_2,\ldots,a_{h_k-h_0-k}\}=\{h_0,h_0+1,\ldots,h_k-1,h_k\}\setminus\{h_0,h_1,\ldots,h_k\}.
$$
Arbitrarily choose distinct primes $h_k<\rho_1,\rho_2,\ldots,\rho_{h_k-h_0-k}\leq w$, which doesn't divide $W_0$.
Let $b$ satisfy an additional requirement:
$$
b\equiv -a_j\pmod{\rho_j}
$$
for each $1\leq j\leq h_k-h_0-k$. 
Evidently the above assumption would not contradict with (\ref{bhjp}), since
$\rho_j>|h_i-a_j|$ for any $0\leq i\leq k$. Thus if $n\sim N$ and $n\equiv b\pmod{W}$,
for each $1\leq j\leq h_k-h_0-k$, $n+a_j$ can not be a prime since it is divisible by $\rho_j$. It follows that those primes among $n+h_0,n+h_1,\ldots,n+h_k$ are surely consecutive primes.
\end{remark}
\begin{remark}
In \cite{BHK05}, Bergelson, Host and Kra considered the generalizations of 
Khintchine's theorem for multiple recurrence problems. Suppose that 
$(\cX,\sB_\cX,\mu,T)$ is a measure-preserving probability system and $T$ is an invertible ergodic transformation.
They proved that for any $A\in\sB_\cX$ with $\mu(A)>0$ and $\epsilon>0$, the sets of recurrence
$$
\{n\in\N:\,\mu(A\cap T^{-n}A\cap T^{-2n}A)\geq \mu(A)^3-\epsilon\}
$$
and
$$
\{n\in\N:\,\mu(A\cap T^{-n}A\cap T^{-2n}A\cap T^{-3n}A)\geq \mu(A)^4-\epsilon\}
$$
both have the bounded gaps. However, in general,
$$
\{n\in\N:\,\mu(A\cap T^{-n}A\cap T^{-2n}A\cap T^{-3n}A\cap T^{-4n}A)\geq \mu(A)^5-\epsilon\}
$$
doesn't have a bounded gap.

Let
$$
\Lambda_{A,\epsilon}^{(2)}=\{p\text{ prime}:\,\mu(A\cap T^{-(p-1)}A\cap T^{-2(p-1)}A)\geq \mu(A)^3-\epsilon\}
$$
and
$$
\Lambda_{A,\epsilon}^{(3)}=\{p\text{ prime}:\,\mu(A\cap T^{-(p-1)}A\cap T^{-2(p-1)}A\cap T^{-3(p-1)}A)\geq \mu(A)^4-\epsilon\}.
$$
Naturally, we may ask whether the Maynard-Tao theorem can be extended to $\Lambda_{A,\epsilon}^{(2)}$ and $\Lambda_{A,\epsilon}^{(3)}$.
Unfortunately, the exponential sums and the Kronecker factors are not sufficient to attack this problem. According to the work of Host and Kra \cite{HK05}, in order to deal with the multiple recurrence problems, we need to use the characters on the nilmanifolds (cf. \cite{GT12}) and the factors arising from the Host-Kra semi-norms.
However, the main difficulty is how to establish a suitable mean-value type summation formula and combine this formula with the Maynard sieve method.   
\end{remark}

\end{document}